\def \N {\mathbb N}
\def \R {\mathbb R}
\def \Ordo {{\cal O}}
\def\ordo{o}
\def \ind{1\!\!1}
\def\ul{\underline}
\def\phi{\varphi}
\def\be{\begin{equation}}
\def\ee{\end{equation}}
\def\bea{\begin{eqnarray}}
\def\eea{\end{eqnarray}}
\newcommand{\abs}[1]{\left|{#1}\right|}
\newcommand{\prob}[1]{\ensuremath{\mathbf{P}\left(#1\right)}}
\def \probp{\mathbf{P}}
\newcommand{\expect}[1]{\ensuremath{\mathbf{E}\left(#1\right)}}
\newcommand{\var}[1]{\ensuremath{\mathbf{D}^2\left(#1\right)}}
\newcommand{\cov}[2]{\ensuremath{\mathbf{Cov}\left(#1,#2\right)}}
\newcommand{\condprob}[2]{\ensuremath{\mathbf{P}\left(#1\,\big|\,#2\right)}}
\newcommand{\condexpect}[2]{\ensuremath{\mathbf{E}\left(#1\,\big|\,#2\right)}}
\def\cG{\mathcal{G}}
\def \mypoi{\mathbf{p}}
\def \mygamma{\mathbf{g}}
\def \toind {\buildrel {\text{d}}\over{\longrightarrow}}
\def \toinp {\buildrel {\text{p}}\over{\longrightarrow}}
\newcommand{\semmi}[1]{}
\def \M       {\mathcal{M}}
\def \A       {\mathcal{A}}
\newtheorem {theorem}{Theorem}
\newtheorem {lemma}{Lemma}[section]
\newtheorem {definition}{Definition}[section]
\title{Multigraph limit of the dense configuration model \\ and the preferential attachment graph}
\author{ Bal\'azs R\'ath \thanks{ETH Z\"urich, Department of Mathematics, R\"amistrasse 101, 8092 Z\"urich. \newline
Email: rathb@math.ethz.ch.}
\and  L\'aszl\'o Szak\'acs \thanks{ E\"otv\"os Lor\'and University, Institute of Mathematics,  P\'azm\'any P\'eter s\'et\'any 1/C,  
  1117 Budapest. Email:  szakacsl@cs.elte.hu } }
\begin{document}

\maketitle
\footnotetext{Keywords:  dense graph limits, multigraphs, configuration model, preferential attachment. \\
MSC2010 classification: 05C80 (Random graphs) }

\bigskip

\begin{abstract}
The configuration model is the most natural model to generate a random multigraph with a given degree sequence.
 We use the notion of dense graph limits to characterize the special form of limit objects of convergent sequences of configuration
models. We apply these results to calculate the limit object corresponding to the dense preferential attachment graph and 
the edge reconnecting model. Our main tools in doing so are (1) the relation between the theory of graph limits and that of partially exchangeable
random arrays (2) an explicit construction of our random graphs that uses urn models.
\end{abstract}

$ $

\section{Introduction}
\label{introduction}

The notion of dense graph limits was introduced in \cite{Lovasz_Szegedy_2006} and has been further developed over the years, see
\cite{very_large_graphs} for a recent survey. Heuristically, the theory of dense graph limits gives a compact way to 
characterize the statistics of a randomly chosen
small subgraph of a large dense graph.
 In \cite{randomlygrown}
the graph limits of various sequences of random dense graphs were calculated and in this paper we proceed with the investigation of this
 topic.

Our objects of study are multigraphs rather than simple graphs, i.e.\ we allow parallel and loop edges: this choice makes the definition of
 the limit objects of convergent multigraph sequences (\emph{multigraphons})  slightly more complicated than the limit objects of simple
graph sequences (graphons), but on the other hand
  the multigraph models defined below  are easier to study than the corresponding simple graph models.

The simplest way to generate a random multigraph with a prescribed degree sequence  is called the \emph{configuration model}:
 we draw $d(v)$ stubs (half-edges) at each vertex $v$ and then we uniformly choose one from the the set of possible matchings of these stubs. 
 In this paper we call such random multigraphs \emph{edge stationary} (for reasons that will become clear later) and in 
Theorem \ref{thm_graph_lim_of_edge_stationary} we  characterize
the special form of limiting multigraphons that arise as the limit of random dense edge stationary multigraph sequences.
Rougly speaking, our theorem states that the number of edges connecting the vertices $v$ and $w$ has Poisson distribution
 with parameter proportional to $d(v)d(w)$.

We also investigate two random graph models which have different definitions but turn out to have the same distribution: 
\begin{itemize} 

\item The \emph{edge reconnecting model} is a random
 multigraph evolving in time.
 Denote the multigraph at time $T$ by $\cG_n(T)$, where $T=0,1,2,\dots$ and $n=\abs{V(\cG_n(T))}$ is the number of vertices.
 We denote by $m=\abs{E(\cG_n(T))}$ the number of edges (the number of vertices and edges does not change over time).
Given the multigraph $\cG_n(T)$ we get $\cG_n(T+1)$ by uniformly choosing an edge in $E(\cG_n(T))$, 
choosing one of the endpoints of that edge with a coin flip and reconnecting the edge to a new endpoint 
which is chosen using the rule of linear preferential attachment: a vertex $v$ is chosen with probability
 $\frac{d(v)+\kappa}{2m+n\kappa}$, where $d(v)$ is the degree of vertex $v$ in $\cG_n(T)$ and
 $\kappa \in (0,+\infty)$  is a fixed parameter of the edge reconnecting model. We look at the unique stationary distribution of this
multigraph-valued  Markov chain which is a random multigraph on $n$ vertices and $m$ edges.

\item  In Section 3.4 of \cite{randomlygrown} a random multigraph called \emph{preferential attachment graph with $n$ nodes
and $m$ edges} (briefly $\text{PAG}(n,m)$) is defined. We slightly generalize the definition to obtain  $\text{PAG}_{\kappa}(n,m)$ where 
$\kappa \in (0,+\infty)$  is a fixed parameter:
let $V=\{v_1, \dots, v_n\}$ be a set of vertices. We create a sequence $v^*_1, \dots, v^*_{2m}$  with 
elements from $V$ by starting with the empty
sequence and appending random elements of $V$ one by one. If the current length of the sequence is $L$ then we choose the next 
element $v^*_{L+1}$ to be equal to 
 $v \in V$ with probability  $\frac{ d(v)+\kappa}{ L+ n\kappa}$, where $d(v)$ is the multiplicity of $v$ in the sequence $v^*_1, \dots, v^*_{L}$.
Now we create the random multigraph $\text{PAG}_{\kappa}(n,m)$ on the vertex set $V$ by adding the edges of form $\{v^*_{2k-1}, v^*_{2k}\}$ 
for each $k=1,\dots,m$.
\end{itemize}

Lemma \ref{lemma_stationary_distribution} states that the above described two random multigraphs have the same distribution.
In Theorem \ref{thm_stac_graphlim} we give the limiting multigraphon of this random multigraph when $n \to \infty$ 
 and $m \approx \frac12 \rho n^2$, where $\rho \in (0,+\infty)$ 
 is a fixed parameter of the model called the \emph{edge density}. Roughly speaking, the limiting multigraphon can be described as follows:
 it is edge stationary, and the rescaled degrees of vertices have Gamma distribution
with parameters depending on $\kappa$ and $\rho$.

$ $

The precise statements of these theorems along with the necessary notations can be found in Section \ref{basic}. 
We end the Introduction with mentioning a
few related results:

The configuration model is a random multigraph, but if we condition it to have no multiple and loop edges, then the resulting random simple
graph is uniformly distributed given its degree sequence.
In \cite{diaconis_chatterjee_sly}
the description of the limiting graphon of such sequences of simple dense graphs (and a
continuous version of the Erd\H{o}s-Gallai characterization of degree sequences) is given.

In \cite{RB_evolution} we give a characterization of the time evolution of the edge reconnecting model, viewed through the prism of the theory of
multigraphons: roughly speaking, if we start the edge reconnecting model from an arbitrary initial multigraph, then we have to run our process for
  $ n^2 \ll T $ steps until $\cG_n(T)$ becomes ``edge stationary'' and run it for 
$ n^3 \ll T$ steps until $\cG_n(T)$  becomes ``stationary''.

$ $

\noindent
{\bf Acknowledgement.}
The authors thank L\'aszl\'o Lov\'asz for posing the research problem that became the subject of this paper.

The research of Bal\'azs R\'ath was  partially supported by the OTKA (Hungarian
National Research Fund) grants
K 60708 and CNK 77778, Morgan Stanley Analytics Budapest and  Collegium Budapest and
 the grant ERC-2009-AdG 245728-RWPERCRI.
The research of L\'aszl\'o Szak\'acs was  partially supported by the OTKA grant NK 67867.

\section{Notation and results}\label{basic}

% deterministic graphs, adjacency matrices, degrees

Denote by  $\N_0=\{0,1,2,\dots\}$,  $[n]:=\{1,\dots,n \}$ and $[k..n]:=\{k,\dots,n\}$. If $H_1$ and $H_2$ are arbitrary sets, 
denote by $f: \, H_1 \hookrightarrow H_2$ a generic injective function from $H_1$ to $H_2$.
Denote by $\M$ the set of undirected multigraphs (graphs with multiple and loop edges) and by $\M_n$ the set of multigraphs on $n$ vertices.
Let $G \in \M_n$. The adjacency matrix of a labeling of the multigraph $G$ with $[n]$
is denoted by $\left(B(i,j)\right)_{i,j=1}^n$, where $B(i,j) \in \N_0$ is the
number of edges connecting the vertices labeled by $i$ and $j$.
$B(i,j)=B(j,i)$ since the graph is undirected
 and  $B(i,i)$ is  two times the number of loop edges at vertex $i$ (thus $B(i,i)$ is an even number).

 We denote the set of adjacency matrices of multigraphs on $n$ nodes by $\A_n$, thus
 \[\A_n = \left\{  B\in \N_0^{n \times n}\, :\, B^T=B, \, \forall \, i \in [n] \,\;\;
   2\, |\,   B(i,i)  \right\}. \]

The degree of the vertex labeled by $i$ in $G$ with adjacency matrix $B \in \A_n$ is defined by
$ d(B,i):= \sum_{j=1}^n B(i,j)$,
thus $d(B,i)$ is the number of stubs at $i$ (loop edges count twice). Let
\begin{equation*}
m=m(G)=m(B)= \frac{1}{2} \sum_{i, j=1}^n B(i,j)=\frac{1}{2} \sum_{i=1}^n d(B,i)
\end{equation*}
 denote the number of edges. 
Denote by $\A_n^m$ the set of adjacency matrices on $n$ vertices with $m$ edges.

%multigraph parameters

 An unlabeled multigraph is the equivalence class of labeled multigraphs where two 
labeled graphs are equivalent if one can be
 obtained by relabeling the other.
  Thus $\M$ is the set of these equivalence classes of labeled multigraphs, which are also called isomorphism types.

  Suppose $F \in \M_k,$  $G \in \M_n$ and denote by $A \in \A_k$ and $B \in \A_n$ the adjacency matrices of $F$ and $G$.
  If  $g: \M \to \R$ then we say that $g$ is a multigraph parameter. Let
$g(A):=g(F)$. 
Conversely, if $g: \bigcup_{k=1}^{\infty} \A_k \to \R$ is constant on isomorphism classes,
 then $g$ defines a multigraph parameter.

\subsection{Multigraphons and multigraph convergence}

We  define the \emph{induced homomorphism density} of $F$ into $G$ by
  \begin{equation}\label{hom_ind}
    t_{=}(F,G):=t_{=}(A,B):= \frac{1}{n^k} \sum_{\varphi:[k] \rightarrow [n]}
     \ind \left[ \, \forall i,j \in [k]:\; A(i,j)= B(\varphi(i),\varphi(j))\right] .
  \end{equation}

The notion of convergence of simple graph sequences and
 several equivalent characterizations of \emph{graphons} 
(limit objects of convergent graph sequences) were given in \cite{Lovasz_Szegedy_2006}.
In \cite{KI_RB} a natural generalization of the theory of dense graph limits to multigraphs is given 
(see also \cite{Lovasz_Szegedy_2010} for similar results in a more general setting).
We say that a sequence of multigraphs  $\left(G_n\right)_{n=1}^{\infty}$ is convergent
if for every $k \in \N$ and every multigraph $F \in \M_k$ the limit $g(F)=\lim_{n \to \infty}t_{=}(F,G_n)$ exists, 
moreover 
 we have $\sum_{A \in \A_k} g(A)=1$. 
 The limit object of a convergent multigraph sequence is a measurable function
 $W:[0,1] \times [0,1] \times \N_0 \to [0,1]$  satisfying

\begin{equation} \label{multigraphon_def_eq_intro}
W(x,y,k) \equiv W(y,x,k), \quad
 \sum_{k=0}^{\infty} W(x,y,k) \equiv 1, \quad
 W(x,x, 2k+1) \equiv 0.
\end{equation}
Such functions are called \emph{multigraphons}. 
 For every multigraphon $W$ and multigraph $F \in \M_k$ with adjacency matrix $A \in \A_k$ we define
\begin{equation}
\label{def_grafon_ind_hom_sur}
 t_{=}(F,W):=  t_{=}(A,W):=  \int_{[0,1]^k} \prod_{i\leq j \leq k}
W(x_i,x_j,A(i,j))\, \mathrm{d} x_1\, \mathrm{d} x_2\, \dots\, \mathrm{d}x_k
\end{equation}

We say that $G_n \to W$ if for every $k \in \N$ and every $F \in \M_k$ we have
 \[\lim_{n \to \infty}t_{=}(F,G_n)=t_{=}(F,W).\] 
 Theorem 1 of \cite{KI_RB} states that if a sequence of multigraphs $\left(G_n\right)_{n=1}^{\infty}$ is 
convergent then $G_n \to W$ for
some multigraphon $W$ and conversely, every multigraphon $W$ arises this way.
 The limiting multigraphon of a convergent sequence is not unique,
 but if we define the equivalence relation 
$W_1 \cong W_2$ by $\forall\, F \in \M :\, t_{=}(F,W_1)=t_{=}(F,W_2)$ then obviously
 $G_n \to W_1$,  $G_n \to W_2 $ implies $W_1 \cong W_2$. For other characterisations
of the equivalence relation $\cong$ for graphons, see \cite{lovasz_uniqueness}.

For a multigraphon $W$ and $x \in [0,1]$ we define the \emph{average degree} of
 $W$ at $x$ and the \emph{edge density} of $W$ by
\begin{align}
\label{def_degree_multtigraphon}
D(W,x) &:= \int_0^1 \sum_{k=0}^{\infty} k\cdot W(x,y,k)\, \mathrm{d}y,  \\
\label{def_edge_density_multigraphon}
\rho(W) &:= \int_0^1 \int_0^1 \sum_{k=0}^{\infty} k\cdot W(x,y,k)\, \mathrm{d}y\, \mathrm{d}x.
\end{align}
If $\rho(W)<+\infty$ then $D(W,x)<+\infty$ for Lebesgue-almost all $x$.

Given a multigraphon $W$ we define the \emph{degree distribution function} of $W$ by 
\begin{equation}\label{def_eq_F_W}
F_W(z) = \int_0^1 \ind [ D(W,x) \leq z ] \, \mathrm{d}x, \qquad z \geq 0.
\end{equation}
Indeed, $F_W(\cdot)$ is a probability distribution function 
on $[0,\infty)$, i.e.\ it is nonnegative, right continuous, increasing
and satisfies $\lim_{z \to \infty} F_W(z) =1$. It is easy to see that we have
$\rho(W)= \int_0^{\infty} z \, \mathrm{d}F_W(z)$.  Denote  by
\begin{equation}\label{def_eq_F_W_inverse}
F_W^{-1}(u):= \min \{z: F_W(z)  \geq u \}, \qquad u \in (0,1). 
\end{equation}

\subsection{Random multigraphs and random adjacency matrices}

We denote a random element of $\A_n$ by $\mathbf{X}_n$. 
We may associate a random multigraph $\cG_n$  to $\mathbf{X}_n$
by taking the isomorphism class of $\mathbf{X}_n$.

We say that a sequence of random multigraphs  $\big(\cG_n\big)_{n=1}^{\infty}$ converges in probability  to
a  multigraphon $W$ (or briefly write $\cG_n \toinp W$) if
 for every multigraph $F$ we have $t_{=}(F,\cG_n) \toinp t_{=}(F,W)$, i.e.
\begin{equation}\label{convergence_of_random_multigraphons}
 \forall \, F \in \M \; \forall \, \varepsilon>0:  \;
\lim_{n \to \infty} \prob{ \abs{ t_{=}(F, \cG_n) - t_{=}(F, W) }>\varepsilon}=0.
\end{equation}
We say that $\mathbf{X}_n \toinp W$ if $\cG_n \toinp W$ holds for the associated random multigraphs.

Note that the definitions of the edge reconnecting model and the $\text{PAG}_{\kappa}$ (see Section \ref{introduction})
in fact naturally give rise to a random labeled graph, i.e.\ a random element $\mathbf{X}_n$ of $\A_n$.
The edge reconnecting Markov chain is easily seen to be  irreducible and aperiodic 
on the state space $\A_n^m$, thus the stationary
distribution is indeed unique.

We say that the distribution of $\mathbf{X}_n$ is \emph{edge stationary} if the conditional distribution
of  $\mathbf{X}_n$ given the degree sequence $\left(d(\mathbf{X}_n,i)\right)_{i=1}^n$ is the same as that of
the configuration model (see Section \ref{introduction}) with the same degree sequence.

 Recall the formulas defining the Poisson and Gamma distributions:
\begin{align}
\label{def_mypoi}
\mypoi(k,\lambda)&:=e^{-\lambda}\frac{\lambda^k}{k!}\\
\label{def_mygamma}
\mygamma(x,\alpha,\beta)&:=x^{\alpha-1} \frac{\beta^{\alpha}
e^{-\beta x}}{\Gamma(\alpha)} \ind[x>0]
\end{align}
We say that a nonnegative integer-valued random variable $X$ has Poisson distribution with parameter
 $\lambda$ (or briefly denote $X \sim \text{POI} \left( \lambda \right)$) if $\prob{X=k}=\mypoi(k,\lambda)$ for
all $k \in \N$. 
 We say that a nonnegative real-valued random variable $Z$ 
has gamma distribution with parameters $\alpha$ and 
$\beta$ (or briefly denote $Z \sim \text{Gamma}(\alpha, \beta)$) if $\prob{Z \leq z}=\int_0^z \mygamma(x,\alpha,\beta) \, \mathrm{d} x  $.

For a real-valued nonnegative random variable $X$ define
\[\expect{ X ; m}:= \expect{ X \cdot \ind[ X \geq m]}.\]

\subsection{Statements of main results}

First we state our theorem characterizing the form of multigraph limits of edge stationary multigraph sequences:

\begin{theorem}\label{thm_graph_lim_of_edge_stationary}
Let $W$ denote a multigraphon with $\rho(W)<+\infty$. 
If $\mathbf{X}_n$ is an $\A_n$-valued edge stationary random variable for all $n \in \N$, $\mathbf{X}_n \toinp W$ 
for some multigraphon $W$, and the sequence $\left( \mathbf{X}_n \right)_{n=1}^{\infty}$ satisfies 
\begin{align}
\label{egyenletes_int_zuri_1}
\lim_{m \to \infty} \sup_{n \in \N} \frac{1}{\binom{n}{2}} \sum_{i<j \leq n} \expect{ X_n(i,j) ; m} &= 0 \\
\label{egyenletes_int_zuri_2}
\lim_{m \to \infty} \sup_{n \in \N} \frac{1}{n} \sum_{i=1}^n \expect{ X_n(i,i) ; m} &= 0,
 \end{align}
then the limiting multigraphon $W$ can be rewritten in the form $\hat{W} \cong W$ where
\begin{equation}\label{edge_stac_lim_graphon}
  \hat{W}(x,y,k)\stackrel{\eqref{def_eq_F_W_inverse}, \eqref{def_mypoi} }{:=}
\left\{
\begin{array}{ll}
\mypoi(k,\frac{F_W^{-1}(x)F_W^{-1}(y)}{\rho(W)}) & \mbox{ if $x \neq y$}\\
\ind [2\,|\,k]\cdot \mypoi \left(\frac{k}{2},\frac{F_W^{-1}(x)F_W^{-1}(y)}{2\rho(W)} \right) & \mbox{ if $x= y$}
\end{array} \right.
\end{equation}

\end{theorem}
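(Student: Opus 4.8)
The plan is to establish the equivalence $\hat{W} \cong W$ directly from its definition, namely by verifying $t_{=}(F,\hat{W}) = t_{=}(F,W)$ for every $F \in \M_k$ and every $k$. Since $\mathbf{X}_n \toinp W$ gives $t_{=}(F,\mathbf{X}_n) \toinp t_{=}(F,W)$, and each $t_{=}(F,\mathbf{X}_n)$ is an average of indicators and hence bounded by $1$, bounded convergence yields $\expect{ t_{=}(F,\mathbf{X}_n)} \to t_{=}(F,W)$. Consequently it suffices to prove the purely computational statement $\expect{ t_{=}(F,\mathbf{X}_n)} \to t_{=}(F,\hat{W})$ for each fixed adjacency pattern $A \in \A_k$; the two limits then coincide and force $t_{=}(F,W) = t_{=}(F,\hat{W})$.

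First I would expand $\expect{ t_{=}(F,\mathbf{X}_n)}$ via \eqref{hom_ind} as $n^{-k} \sum_{\varphi : [k] \to [n]} \prob{ \forall\, i,j:\; A(i,j) = X_n(\varphi(i),\varphi(j))}$. The non-injective maps $\varphi$ number $\Ordo(n^{k-1})$ and so contribute $\Ordo(1/n)$; discarding them, the remaining sum is an average of the probability of seeing the pattern $A$ on a uniformly chosen ordered $k$-tuple of distinct vertices. Here edge stationarity is the crucial structural input: conditionally on the degree sequence $\left( d(\mathbf{X}_n,i) \right)_{i=1}^n$, the array $\mathbf{X}_n$ is a configuration model, so I can compute this probability inside the configuration model on $k$ fixed distinct vertices and then average over the random degrees. (This is also the point at which the structural link between multigraphons and partially exchangeable arrays enters in the background: the computation below is exactly the identification of the conditional law of the limit array on $k$ sampled points.)

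The analytic core is a Poisson approximation for the configuration model restricted to a bounded vertex set. For $k$ fixed distinct vertices with degrees $d_1,\dots,d_k$ of order $n$ and total stub count $2m \approx \rho(W)\, n^2$, I would prove that the joint law of the off-diagonal multiplicities $\left( X_n(\varphi(i),\varphi(j)) \right)_{i<j}$ together with the loop counts $\left( \tfrac12 X_n(\varphi(i),\varphi(i)) \right)_{i}$ converges to a product of independent Poisson variables with parameters $\frac{d_i d_j}{2m}$ off the diagonal and $\frac{d_i^2}{4m}$ for the loops; the factor $\tfrac12$ on the loop count and the parity constraint $\ind[2\,|\,k]$ both reflect that each loop consumes two stubs, which is precisely what produces the two cases of \eqref{edge_stac_lim_graphon}. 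This is the classical stub-matching heuristic --- each pair of stubs is joined with probability $\approx 1/(2m)$, and a bounded number of target pairs become asymptotically independent --- but making it quantitative, and uniform enough to survive both the averaging over $\varphi$ and the integration against a random degree sequence, is where the genuine work lies.

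Finally I would pass from the empirical average over degree-tuples to the integral \eqref{def_grafon_ind_hom_sur} defining $t_{=}(F,\hat{W})$. For this I would use that the empirical distribution of the rescaled degrees $\tfrac1n d(\mathbf{X}_n,i)$ converges to $F_W$ (cf.\ \eqref{def_eq_F_W}), so that under the substitution $\lambda_i = F_W^{-1}(x_i)$ with $x_i$ uniform the limiting rescaled degree of a sampled vertex is $F_W^{-1}(x_i)$; substituting $\lambda_i\lambda_j/\rho(W)$ and $\lambda_i^2/(2\rho(W))$ into the above product over pairs reproduces verbatim the integrand of \eqref{edge_stac_lim_graphon}. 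A short direct check, using $\int_0^1 F_W^{-1}(y)\,\mathrm{d}y = \rho(W)$, gives $D(\hat{W},x) = F_W^{-1}(x)$ and $\rho(\hat{W}) = \rho(W)$, confirming internal consistency of the parameters. The main obstacle --- and the sole role of the uniform integrability hypotheses \eqref{egyenletes_int_zuri_1}--\eqref{egyenletes_int_zuri_2} --- is the control of atypically large multiplicities and high-degree vertices: without them the rescaled degrees need not converge in mean, so that $\rho(\mathbf{X}_n)=2m(\mathbf{X}_n)/n^2$ could fail to converge to $\rho(W)$, and mass could escape from the Poisson parameters, invalidating the interchange of limit and expectation. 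I therefore expect the bulk of the rigorous argument to be a truncation scheme: use \eqref{egyenletes_int_zuri_1}--\eqref{egyenletes_int_zuri_2} to cap degrees and edge multiplicities, prove the Poisson approximation and the degree convergence on the truncated model, and then let the truncation level tend to infinity.
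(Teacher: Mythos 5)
Your proposal is correct and follows essentially the same route as the paper: edge stationarity reduces the sampled pattern probability to a configuration-model computation conditional on the degrees, which the paper carries out by exact urn counting in Lemma \ref{lemma_edge_stac_graphlim} (yielding precisely your Poisson parameters $d_i d_j/(2m)$ off the diagonal and $d_i^2/(4m)$ for loops, with the parity constraint), while the hypotheses \eqref{egyenletes_int_zuri_1}--\eqref{egyenletes_int_zuri_2} are used, via the truncation and method-of-moments machinery of Lemma \ref{lemma_uniform_integrabiliy_whole}, exactly as you predict: to show that the sampled rescaled degrees converge to i.i.d.\ $F_W$-distributed variables and that $2m(\mathbf{X}_n)/n^2 \toinp \rho(W)$. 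The only (harmless) packaging difference is that you compare limits of $\expect{t_=(F,\mathbf{X}_n)}$ directly to conclude $\hat{W} \cong W$, whereas the paper symmetrizes $\mathbf{X}_n$ by a uniform random permutation and proves the stronger statement $\tilde{\mathbf{X}}_n \toinp \hat{W}$ through the exchangeable-array equivalence of Lemma \ref{lemma_homkonv_konv_in_dist}; both reduce to the same sampling computation.
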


Now we state our results describing the multigraph limit of the $\text{PAG}_{\kappa}(n,m)$ and
 the stationary distribution of the edge reconnecting model.

For an adjacency matrix $B \in \A_n$ denote by
$m'(B)=\sum_{i=1}^n \sum_{j=1}^{i-1} B(i,j)$
the number of non-loop edges of the corresponding graph.

\begin{lemma}\label{lemma_stationary_distribution}
 The unique stationary  distribution of the edge reconnecting model with linear preferential attachment parameter $\kappa$ and
 state space $\A_n^m$
 has the same distribution as \emph{$\text{PAG}_{\kappa}(n,m)$}. If  $\mathbf{X}_n$ has this distribution then for all $B \in  \A_n^m$ 

 \begin{equation}  \label{stationary}
 \prob{ \mathbf{X}_n=B}  =  \frac{\prod_{i=1}^n
\prod_{j=1}^{d(B,i)}(\kappa+j-1)}{ \prod_{j=1}^{2m}(\kappa n
+j-1)}
 \frac{m! 2^{m'(B)}}{\left(\prod_{i=1}^n \prod_{j=1}^{i-1} B(i,j)!\right)
  \left(\prod_{i=1}^n \frac{B(i,i)}{2}! \right)}
 \end{equation}
\end{lemma}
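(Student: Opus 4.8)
The plan is to compute the law of $\text{PAG}_\kappa(n,m)$ explicitly, verify that the resulting probability measure $\pi$ on $\A_n^m$ is stationary for the edge reconnecting chain, and then invoke the already-noted uniqueness of the stationary distribution to identify the two laws and establish \eqref{stationary}.

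First I would compute $\prob{\text{PAG}_\kappa(n,m) = B}$ for a fixed $B \in \A_n^m$. The sequence $v^*_1, \dots, v^*_{2m}$ is generated by a P\'olya-type urn with $n$ colors, each carrying initial weight $\kappa$, where each draw increments the weight of the drawn color by $1$. Hence the probability of producing a fixed sequence in which vertex $i$ occurs exactly $d_i$ times equals
\[
\frac{\prod_{i=1}^n \prod_{j=1}^{d_i}(\kappa + j - 1)}{\prod_{j=1}^{2m}(\kappa n + j - 1)},
\]
since at the $(L{+}1)$-st draw the denominator is $L + n\kappa$ (for $L = 0, \dots, 2m-1$) and the $j$-th occurrence of $i$ contributes the numerator factor $\kappa + j - 1$. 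Crucially this value depends only on the occurrence counts $(d_i)$ and not on their order, which is the exchangeability of the urn. For $B$ with degrees $d_i = d(B,i)$ I would then count the sequences yielding $B$ upon pairing $\{v^*_{2k-1}, v^*_{2k}\}$: distributing the $m$-element multiset of edges into the $m$ ordered slots gives the multinomial factor $m!/\big(\prod_{i<j} B(i,j)!\,\prod_i (B(i,i)/2)!\big)$, and each of the $m'(B)$ non-loop edges may be oriented in two ways while each loop admits a single orientation, contributing $2^{m'(B)}$. Multiplying the per-sequence probability by this count reproduces exactly the right-hand side of \eqref{stationary}; call this measure $\pi$.

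Next I would show $\pi$ is stationary by verifying detailed balance, $\pi(B)\,P(B,\hat B) = \pi(\hat B)\,P(\hat B, B)$, for states $B, \hat B \in \A_n^m$ related by a single reconnection. A transition that detaches the $a$-end of an $\{a,b\}$-edge and reattaches it to $c$ decreases $d(B,a)$ by one and increases $d(B,c)$ by one, so the degree-dependent factor of $\pi$ changes by $(\kappa + d(B,a) - 1)/(\kappa + d(B,c))$, while the forward and backward transition probabilities yield the ratio $\frac{(\text{multiplicity of target edge})(\kappa + d(B,a)-1)}{(\text{multiplicity of source edge})(\kappa + d(B,c))}$ through the uniform edge choice and the preferential weight $\tfrac{d(v)+\kappa}{2m+n\kappa}$. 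In the generic case (all of $\{a,b\},\{c,b\}$ non-loop and $a,b,c$ distinct) the edge-multiplicity ratio is matched precisely by the change in the factorial part $m!/(\prod B(i,j)!\,\prod (B(i,i)/2)!)$ of $\pi$, and the degree factors are common to both sides, so detailed balance holds.

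The main obstacle is the case analysis around loops, where the factor $2^{m'(B)}$ does genuine work. When a non-loop $\{a,b\}$-edge is reconnected into a loop at $b$ (reattaching the $a$-end to $c=b$), the coin flip contributes $\tfrac12$ in the forward direction but the reverse move from a loop is forced, so the transition ratio acquires an extra factor $2$; simultaneously $m'$ drops by one, so $2^{m'(B)}/2^{m'(\hat B)} = 2$ appears in the ratio of $\pi$, and the loop-count factorial $(B(b,b)/2)!$ replaces the edge factorial, matching exactly. I would check this loop$\leftrightarrow$non-loop case and its reverse, together with the loop$\to$loop and trivial $c=a$ diagonal cases, to confirm detailed balance in full; this is the only delicate bookkeeping. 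With $\pi$ stationary and the chain irreducible and aperiodic on $\A_n^m$, uniqueness of the stationary distribution forces it to equal $\pi$, which we have already identified as the law of $\text{PAG}_\kappa(n,m)$, completing the proof of both assertions and of formula \eqref{stationary}.
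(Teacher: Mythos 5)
Your proposal is correct, and while its first half coincides with the paper, the stationarity argument takes a genuinely different route. Computing the law of $\text{PAG}_\kappa(n,m)$ by multiplying the exchangeable P\'olya sequence probability by the count $m!\,2^{m'(B)}\big/\big(\prod_{i<j}B(i,j)!\prod_i (B(i,i)/2)!\big)$ of sequences producing $B$ is exactly what the paper does, packaged as \eqref{polya_distribution} combined with \eqref{edge_stationarity_eq} via the urn-to-adjacency map \eqref{from_urn_to_adjmatrix}. For stationarity, however, the paper never touches detailed balance on $\A_n^m$: it lifts the dynamics to the ball replacement chain on $[n]^{[2m]}$, proves stationarity of \eqref{polya_distribution} there by a soft exchangeability argument (deleting the uniformly chosen $\xi_T$'th ball from a P\'olya configuration has the same law as deleting the last ball, so re-adding a ball by rule \eqref{ball_replacement_rule} restores the P\'olya-$\Psi_{2m}$ law), and then projects, using that the image process is exactly the edge reconnecting chain. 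You instead verify reversibility of the graph-level chain directly, and your bookkeeping is right: in the generic move the ratio $\pi(\hat B)/\pi(B)=\frac{(\kappa+d(B,c))\,B(a,b)}{(\kappa+d(B,a)-1)\,(B(c,b)+1)}$ matches $P(B,\hat B)/P(\hat B,B)$ after the coin factors $\tfrac12$ cancel, and in the loop-creation case the forward $\tfrac12$ against the forced reverse move is compensated precisely by $m'$ dropping by one and by $(B(b,b)/2)!$ replacing the edge factorial, as you say. Two comparative remarks: your route proves something slightly stronger (the chain is \emph{reversible} with respect to \eqref{stationary}, not merely stationary) and stays self-contained at graph level, at the price of the loop case analysis, where you also implicitly use that each transition $B\to\hat B\neq B$ is realized by a unique triple (edge type, detached endpoint, target vertex), so no summation over realizations is needed — worth one sentence; the paper's urn lifting avoids all case analysis (the loop and multiplicity bookkeeping is done once, inside \eqref{edge_stationarity_eq}) and, more importantly, the exchangeability machinery it sets up (ball exchangeability preserved by the dynamics, hence edge stationarity of the chain at all times) is reused later in the proofs of Theorem \ref{thm_graph_lim_of_edge_stationary} and Theorem \ref{thm_stac_graphlim}, which your direct computation would not provide.
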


At the end of Section 3.4 of \cite{randomlygrown} the following theorem is stated:

Let $\text{SPAG}(n,m)$ denote the simple graph obtained from $\text{PAG}(n,m)$ by deleting loops and keeping only one copy of the parallel edges. Then
\begin{equation}\label{spag_thm}
\text{SPAG}\left(n, \frac{n^2}{2} \cdot (\rho +\ordo(1))\right) \toinp W_s,
\qquad W_s(x,y):=1-\exp(-\rho \ln(x)\ln(y)),
\end{equation}
where (analogously to \eqref{convergence_of_random_multigraphons}) the symbol 
$\toinp$ denotes convergence in probability of 
a sequence of random simple graphs to a (simple) graphon.

It is easy to see that \eqref{spag_thm} is a corollary of the following theorem:

\begin{theorem}\label{thm_stac_graphlim}
Let us fix $\kappa, \rho \in (0,+\infty)$.
If $\mathbf{X}_n$ is a random element of $\A_n^{m(n)}$ with distribution \eqref{stationary} for $n=1,2,\dots$, moreover the asymptotic edge
density is
\[\lim_{n \to \infty} \frac{2m(n)}{ n^2}=\rho,\] then $\mathbf{X}_n \toinp W$ where

 \begin{equation}  \label{stac_poi}
W(x,y,k) =
 \left\{
\begin{array}{ll}
\mypoi(k,\frac{F^{-1}(x)F^{-1}(y)}{\rho}) & \mbox{ if $x \neq y$}\\
\ind [2|k]\cdot \mypoi \left(\frac{k}{2},\frac{F^{-1}(x)F^{-1}(y)}{2\rho} \right) & \mbox{ if $x= y$}
\end{array} \right.
 \end{equation}
and $F^{-1}$ is the inverse function of 
$F(z)=\int_0^z \mygamma(y,\kappa,\frac{\kappa}{\rho}) \mathrm{d}y$, see \eqref{def_mygamma}.
\end{theorem}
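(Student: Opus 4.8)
The plan is to recognize that the target multigraphon \eqref{stac_poi} is \emph{exactly} the edge-stationary Poisson form \eqref{edge_stac_lim_graphon} produced by Theorem \ref{thm_graph_lim_of_edge_stationary}, specialized to the case in which the degree distribution function $F_W$ equals the Gamma distribution function $F(z)=\int_0^z \mygamma(y,\kappa,\kappa/\rho)\,\mathrm{d}y$ and $\rho(W)=\rho$. Accordingly, the proof splits into three tasks: (i) verify that $\mathbf{X}_n$ with distribution \eqref{stationary} is edge stationary; (ii) show that the empirical distribution of the rescaled degrees $\big(d(\mathbf{X}_n,i)/n\big)_{i=1}^n$ converges in probability to the Gamma$(\kappa,\kappa/\rho)$ law; and (iii) verify the uniform-integrability conditions \eqref{egyenletes_int_zuri_1}--\eqref{egyenletes_int_zuri_2}, so that Theorem \ref{thm_graph_lim_of_edge_stationary} applies and pins the limit to the Poisson form with the Gamma degree profile.

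For edge stationarity I would factor the right-hand side of \eqref{stationary} as a product of a term depending only on the degree sequence $\big(d(B,i)\big)_{i=1}^n$, namely $\prod_i\prod_{j=1}^{d(B,i)}(\kappa+j-1)\big/\prod_{j=1}^{2m}(\kappa n+j-1)$, times the combinatorial factor $m!\,2^{m'(B)}\big/\big(\prod_{i<j}B(i,j)!\,\prod_i (B(i,i)/2)!\big)$. Conditioning on the degree sequence makes the first factor constant, so the conditional law of $\mathbf{X}_n$ is proportional to the second factor. A direct count of stub-matchings shows that, for fixed degrees, this second factor is proportional to the number of stub-matchings realizing $B$, the proportionality constant (involving $m!$, $2^m$ and $\prod_i d(B,i)!$) depending only on the degrees; since that constant cancels under normalization, the conditional law coincides with the configuration-model law with the given degrees. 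This establishes edge stationarity.

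For the degree distribution I would use the Pólya-urn description of $\text{PAG}_{\kappa}(n,m)$ underlying Lemma \ref{lemma_stationary_distribution}: the multiplicities $\big(d(\mathbf{X}_n,i)\big)_{i=1}^n$ are the occupation counts of an urn with $n$ colours, each of initial weight $\kappa$, sampled $2m$ times, so their joint law is Dirichlet-multinomial with parameters $(\kappa,\dots,\kappa)$ and $2m$ draws. Via the Gamma representation of the Dirichlet law I would write $d(\mathbf{X}_n,i)\approx 2m\,G_i/\sum_j G_j$ with $G_j$ i.i.d.\ Gamma$(\kappa,1)$; since $\sum_j G_j=n\kappa(1+\ordo(1))$ by the law of large numbers and $2m(n)=\rho n^2(1+\ordo(1))$ by hypothesis, this gives $d(\mathbf{X}_n,i)/n\approx(\rho/\kappa)G_i$, and $(\rho/\kappa)G_i\sim$ Gamma$(\kappa,\kappa/\rho)$. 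Controlling the multinomial and normalization fluctuations (both of order $\ordo(n)$), a Glivenko--Cantelli argument then yields that the empirical law of $\big(d(\mathbf{X}_n,i)/n\big)_i$ converges in probability to $F$, whence $F_W=F$ and $\rho(W)=\int z\,\mathrm{d}F(z)=\rho$.

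Finally I would combine (i)--(iii). For an edge-stationary sequence the subgraph densities $t_{=}(F,\mathbf{X}_n)$ concentrate: their conditional variance given the degree sequence vanishes, and their conditional mean is a functional of the empirical degree distribution, which by (ii) converges; this shows $\mathbf{X}_n\toinp W$ for some multigraphon $W$. Theorem \ref{thm_graph_lim_of_edge_stationary}, whose hypotheses (i) and (iii) are verified above, then forces $W\cong\hat W$ with $\hat W$ given by \eqref{edge_stac_lim_graphon}, and since (ii) identifies $F_W=F$ and $\rho(W)=\rho$, this $\hat W$ is precisely \eqref{stac_poi}. I expect the main obstacle to be the simultaneous double limit in step (ii): because the number of colours $n$ and the number of draws $2m\asymp\rho n^2$ tend to infinity together, the standard fixed-$n$ Dirichlet-multinomial asymptotics do not apply, and one must quantify the joint fluctuations of the occupation counts around $2m\,G_i/\sum_j G_j$ (and the concentration of $\sum_j G_j$) finely enough to pass to the limit in the empirical degree distribution. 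A secondary technical point is checking \eqref{egyenletes_int_zuri_1}--\eqref{egyenletes_int_zuri_2}, which needs uniform tail control on the edge multiplicities $X_n(i,j)$; in the dense regime these are Poisson-like with bounded parameters, so the required bounds should follow from the same urn estimates.
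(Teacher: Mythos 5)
Your overall strategy is workable, but its pivot is misaligned with the logical structure of the paper, and the misalignment hides the real work. Theorem \ref{thm_graph_lim_of_edge_stationary} takes the convergence $\mathbf{X}_n \toinp W$ as a \emph{hypothesis}; it only identifies the form of a limit already assumed to exist. So invoking it forces you first to prove that the $\text{PAG}_{\kappa}$ sequence converges to some multigraphon at all, and to identify the degree distribution $F_W$ of that limit. You dispose of this in one sentence (``the conditional variance given the degree sequence vanishes, and the conditional mean is a functional of the empirical degree distribution''), but that sentence is precisely where the substance of the theorem lives. Making it rigorous means computing the conditional law $\condprob{\mathbf{X}_n^{[k]}=A}{\left(d(\mathbf{X}_n,i)\right)_{i=1}^k,\, m(\mathbf{X}_n)}$ exactly --- the ratio of the urn-configuration counts \eqref{tutu_kedvezo} and \eqref{tutu_osszes} --- and proving that it converges, uniformly on the region \eqref{assumptions_eps} where the edge density and rescaled degrees are controlled, to the Poisson kernel $\mypoi(A,(z_i)_{i=1}^k,\rho_n)$; this is exactly Lemma \ref{lemma_edge_stac_graphlim}, whose proof occupies most of Section \ref{subsection_limits_edge_stac_thm12}. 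Once that computation is done, Theorem \ref{thm_graph_lim_of_edge_stationary} is superfluous: the paper proves Theorem \ref{thm_stac_graphlim} directly from Lemma \ref{lemma_edge_stac_graphlim} (vertex exchangeability and edge stationarity from the P\'olya urn, plus \eqref{edge_stac_rho_asymp}, which here holds deterministically, plus the degree convergence \eqref{edge_stac_conv_Z_assumpt}), and consequently never needs the uniform-integrability conditions \eqref{egyenletes_int_zuri_1}--\eqref{egyenletes_int_zuri_2}. Your route makes step (iii) mandatory, and your justification for it is wrong as stated: the edge multiplicities are conditionally Poisson with parameters $Z_iZ_j/\rho$ where the $Z_i$ are Gamma-distributed, hence \emph{unbounded}; uniform integrability does hold, but it requires an actual uniform second-moment bound such as $\sup_n \expect{X_n(1,2)^2}<+\infty$, computed in the urn model, and the identification $F_W=F$ along your route additionally routes through Lemma \ref{lemma_uniform_integrabiliy_whole}/\eqref{lemma_uniform_integrabiliy_b}, which needs that same uniform integrability.

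On the parts that stand: your step (i) is correct and is essentially the paper's characterization \eqref{edge_stationarity_eq} of edge stationarity --- the factorization of \eqref{stationary} into a degree-only factor times the stub-matching count is exactly why the image of the ball-exchangeable P\'olya distribution \eqref{polya_distribution} under \eqref{from_urn_to_adjmatrix} is edge stationary. Your step (ii) is a genuinely different and viable alternative to the paper's method of moments: conditionally on the Dirichlet weights $p_i=G_i/\sum_j G_j$, the count $d(\mathbf{X}_n,i)$ is a multinomial marginal with conditional mean $2m\,p_i \approx (\rho/\kappa)\, n\, G_i$ and conditional standard deviation $O(\sqrt{n})$, so $\frac{1}{n}d(\mathbf{X}_n,i)\to(\rho/\kappa)G_i$ \emph{jointly} for $i\in[k]$, which is i.i.d. $\text{Gamma}(\kappa,\kappa/\rho)$; the double limit you flag as the main obstacle is thus quite manageable by Chebyshev plus the law of large numbers for $\sum_j G_j$. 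Note, however, that what Lemma \ref{lemma_edge_stac_graphlim} consumes is this joint finite-dimensional convergence \eqref{edge_stac_conv_Z_assumpt}, not the Glivenko--Cantelli statement about the empirical degree distribution you aim for --- fortunately the Dirichlet representation delivers the stronger form directly, whereas the paper's moment computation (reducing to injective maps via ball exchangeability) achieves the same with less probabilistic machinery. In sum: the proposal is salvageable, but as written it has a genuine gap at the existence-and-identification of the limit, concealed inside an unproven concentration claim whose rigorous version is the paper's central combinatorial estimate.
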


 Note the similarity of the multigraphons appearing in \eqref{edge_stac_lim_graphon}
and \eqref{stac_poi}: as we will see later,
 this is a consequence of the fact that the distribution of
 $\text{PAG}_{\kappa}(n,m)$ is edge stationary.

The proofs of the above stated theorems rely on the following ideas:
\begin{itemize}
 \item We relate our random graph models to urn models with multiple colors (e.g. the well-known P\'olya urn model):
the number of balls is $2m$ and  they are colored with $n$ possible colors. Each ball corresponds to a stub, each color
corresponds to a labeled vertex and the edge set of the multigraph depends on the positions of balls in the urn.
\item We make use of the underlying symmetries of the distributions of our random graphs by relating the
 theory of graph limits to the theory of partially exchangeable arrays of random variables,
 a connection first observed in \cite{diaconis_janson}.
\end{itemize}

The rest of this paper is organized as follows: 

In Section \ref{section_vertex_exch} we introduce
the notion of random, vertex exchangeable, infinite adjacency matrices as well 
as $W$-random multigraphons and deduce some useful 
results relating the convergence of these objects to graph limits.

 In Section  \ref{section_stationary_state} we relate
the notion of edge stationarity to the ball exchangeability of the corresponding urn models and prove the convergence results stated above.

\section{Vertex exchangeable arrays}\label{section_vertex_exch}

In this section we introduce random infinite arrays $\mathbf{X}=\big( X(i,j) \big)_{i,j=1}^{\infty}$ that 
arise as the adjacency matrices of random infinite labeled multigraphs and
we give probabilistic meaning to the homomorphism densities $t_=(F,W)$ 
 by introducing $W$-random infinite multigraphs $\mathbf{X}_W$.
 We also introduce the notion of  the average degree $D(\mathbf{X},i)$ of a vertex $i$ in an
 infinite, dense, vertex exchangeable multigraph.

In Subsection \ref{subsection_conv_of_arrays} give a useful alternative characterisation of $\cG_n \toinp W$ using
exchangeable arrays and prove that under certain technical conditions the average degrees of $\cG_n$ converge
in distribution to the average degrees $D(\mathbf{X}_W,i)$ of the limiting $W$-random infinite array.

$ $

 Let $\A_{\N}$ denote the set of adjacency matrices $\left(A(i,j)\right)_{i,j=1}^{\infty}$of countable  multigraphs:
 \begin{equation*}
 \A_{\N} = \left\{  A\in \N_0^{\N \times \N}\, :\, \forall \, i,j \in \N\;  A(i,j)\equiv A(j,i), \; \;\, \forall \, i \in \N\,\;
   2\, |\,   A(i,i)  \right\}.
 \end{equation*}
  We consider the probability space $\left(\A_{\N}, \mathcal{F}, \probp \right)$ where $\mathcal{F}$ is the coarsest sigma-algebra with
 respect to which $A(i,j)$ is measurable for all $i,j$ and $\probp$ is a probability measure on the measurable space
 $\left( \A_{\N}, \mathcal{F} \right)$. We are going to denote the infinite random array with distribution
 $\probp$ by $\mathbf{X} = \left(X(i,j) \right)_{i,j=1}^{\infty}$. We use the standard notation $\mathbf{X}\sim \mathbf{Y}$ if
 $\mathbf{X}$ and $\mathbf{Y}$ are 
identically distributed (i.e.\ their distribution $\probp$ is identical on $\left( \A_{\N}, \mathcal{F}\right)$).

If $\mathbf{X}$ is a random element of $\A_{\N}$, let $\mathbf{X}^{[k]}$ be the random element of $\A_k$ defined by
 $\mathbf{X}^{[k]}:=\left(X(i,j)\right)_{i,j=1}^k$.

 \begin{definition}[$W$-random infinite multigraphons]
\label{def_X_W}
 Let $\left(U_i\right)_{i=1}^{\infty}$  be independent random variables uniformly distributed in $[0,1]$.
 Given a multigraphon $W$
  we define the random countable adjacency matrix
 $\mathbf{X}_W = \left(X_W(i,j)\right)_{i,j=1}^{\infty}$ as follows: Given the background variables 
 $(U_i)_{i=1}^{\infty}$ the random variables
 $\left(X_W(i,j)\right)_{i \leq j \in \N}$ are conditionally independent and
 \[\condprob{ X_W(i,j)=m}{(U_i)_{i=1}^{\infty} }=W(U_i,U_j,m),\]
  that is
  if $A \in \A_k$ then we have
 \begin{equation}\label{X_W_indep_prod_formula}
 \condprob{ \mathbf{X}_W^{[k]}=A}{(U_i)_{i=1}^{\infty} }:=
 \prod_{i\leq j \leq k} W(U_i,U_j,A(i,j)).
 \end{equation}
 \end{definition}
In plain words: if $i \neq j$ and $U_i=x$, $U_j=y$
 then the number of multiple edges between the vertices labeled by $i$ and $j$
 in $\mathbf{X}_W$ has  distribution $\big( W(x,y,k) \big)_{k=1}^{\infty}$ and
 the number of loop edges at vertex $i$ has distribution
  $\big( W(x,x,2k) \big)_{k=1}^{\infty}$ (these are indeed proper probability distributions by \eqref{multigraphon_def_eq_intro}).

For every multigraphon $W$ and  multigraph $F \in \M_k$ with adjacency matrix $A \in \A_k$ we have
\begin{equation} \label{homind_grafon_valszamosan}
  t_{=}(F,W) \stackrel{\eqref{def_grafon_ind_hom_sur},\eqref{X_W_indep_prod_formula}}{=} \prob{ \mathbf{X}_W^{[k]}=A }.
\end{equation}

Recalling \eqref{def_degree_multtigraphon} and \eqref{def_edge_density_multigraphon} we have
\begin{equation}\label{degree_W_expect}
D(W,x)  =\condexpect{X_W(1,2)}{U_1=x},  \qquad \quad
\rho(W) =\expect{X_W(1,2)}.
\end{equation}
If $\rho(W)<+\infty$ then $D(W,U_1)<+\infty$ almost surely.

We say that a random infinite array $\mathbf{X} = \left(X(i,j)\right)_{i,j=1}^{\infty}$ is \emph{vertex exchangeable} if
\begin{equation}\label{vertex_exch}
  \left(X(\tau(i),\tau(j))\right)_{i,j=1}^{\infty} \sim \left(X(i,j)\right)_{i,j=1}^{\infty}
\end{equation}
for all finitely supported permutations $\tau : \N\rightarrow \N$. We call $\mathbf{X} = \left(X(i,j)\right)_{i,j=1}^{\infty}$ 
\emph{dissociated} if for all $m,n \in \N$ the $\A_n$-valued random variable
 $\left(X(i,j)\right)_{i,j=1}^n$ is independent of the $\A_m$-valued random variable $\left( X(i,j)\right)_{i,j=n+1}^{n+m}$.

In our case an infinite exchangeable array can be thought of as the adjacency matrix of a random multigraph with vertex set $\N$: 
the adjacency matrix of this random infinite multigraph is vertex 
exchangeable if and only if the distribution of the random graph is invariant under the relabeling of the vertices and dissociated if
 and only if subgraphs spanned by disjoint vertex sets are independent.

It follows from Definition \ref{def_X_W} that $\mathbf{X}_W$ is vertex exchangeable and dissociated and by 
Aldous' representation theorem (see Theorem 1.4, Proposition 3.3 and Theorem 5.1 in \cite{aldous_exch}), the converse holds:
a random element $\mathbf{X}$ of $\A_{\N}$ is vertex exchangeable and dissociated if and only if $\mathbf{X} \sim \mathbf{X}_W$
for some multigraphon $W$. Although the notion of the $W$-random graph (see Definition \ref{def_X_W})
 is already present in \cite{Lovasz_Szegedy_2006},  the connection of Aldous' representation theorem with the 
theory of graph limits was first observed in \cite{diaconis_janson}. 
See also Theorem 3.1, Theorem 3.2, Proposition 3.4 of \cite{Lovasz_Szegedy_2009}.
For a self-contained proof of this representation theorem for multigraphons, see  Theorem 1 and Theorem 2 in \cite{KI_RB}.

For a vertex exchangeable infinite array $\mathbf{X}$ satisfying $\expect{X(1,2)}<+\infty$
 we define the average degree of $\mathbf{X}$ at vertex $i$ by
\begin{equation}\label{def_D_X_lim}
 D(\mathbf{X},i):= \lim_{n \to \infty} \frac1n \sum_{j=1}^n X(i,j).
\end{equation}
 The sum $\frac1n \sum_{j=1}^n X(i,j)$ indeed almost surely converges to a
random variable as $n \to \infty$ by de Finetti's theorem (see Section 2.1 of \cite{aldous_more}) and the conditional strong law of large numbers.
From \eqref{def_degree_multtigraphon}, Definition \ref{def_X_W} and \eqref{degree_W_expect} we get
\begin{equation}\label{degree_W_loln}
D(\mathbf{X}_W,i)= \lim_{n \to \infty} \frac1n \sum_{j=1}^n X_W(i,j) \stackrel{\text{a.s.}}{=}  D(W,U_i).
\end{equation}

\subsection{Convergence of exchangeable arrays}
\label{subsection_conv_of_arrays}
In this subsection we state and prove two lemmas: in  Lemma \ref{lemma_homkonv_konv_in_dist}  we relate convergence of dense random multigraphs to
 convergence of the probability measures of the corresponding random arrays and in Lemma 
\ref{lemma_uniform_integrabiliy_whole} we give sufficient conditions under which convergence of dense random multigraphs
imply convergence of the degree distribution of these graphs. 

$ $

We say that a sequence of random infinite  arrays $\left( \mathbf{X}_n \right)_{n=1}^{\infty}$ 
\emph{converges in distribution} 
to a random infinite  array $\mathbf{X}$ (or briefly denote  $\mathbf{X}_n \, \toind \mathbf{X}$) if
$\mathbf{X}_n^{[k]}$ converges in distribution to  $\mathbf{X}^{[k]}$ for all $k \in \N$, i.e.
\begin{equation}\label{eobankonv_detailed}
 \forall \, k \in \N, \; A \in \A_n: \; \; \lim_{n \to \infty} \prob{ A=\mathbf{X}_n^{[k]}} 
= \prob{ A=\mathbf{X}^{[k]}}
\end{equation}
 If $\mathbf{X}_n$ is vertex exchangeable for all $n$, then $\mathbf{X}$
 is also vertex exchangeable. 

Let $\mathbf{X}_n$ denote a random element of $\A_n$.
We say that the distribution $\mathbf{X}_n$ is \emph{vertex exchangeable} if
  for all permutations $\tau: [n] \to [n]$ and $B \in \A_n$
\begin{equation}\label{finite_array_exch}
\prob{ \forall\, i, j \in [n]: \; B(i,j)=X_n(i,j)}=\prob{ \forall\, i, j \in [n]:\; B(i,j)=X_n(\tau(i),\tau(j))},
\end{equation}
that is  $\left(X(i,j)\right)_{i,j=1}^n \sim \left(X(\tau(i),\tau(j))\right)_{i,j=1}^n$ holds.

 If $\mathbf{X}_n$ is a random element of $\A_n$ then
 $\mathbf{X}_n^{[k]}=\left(X_n(i,j)\right)_{i,j=1}^k$ is well-defined for $k \leq n$, thus we 
might define $\mathbf{X}_n \, \toind \mathbf{X}$ 
(where $\mathbf{X}$ is a random element of $\A_{\N}$)
by \eqref{eobankonv_detailed}. It is easy to show that if $\mathbf{X}_n$ is vertex exchangeable for each $n \in \N$ then $\mathbf{X}$ inherits
 this property.

Also note that by \eqref{homind_grafon_valszamosan} we have $\mathbf{X}_n \toind \mathbf{X}_W$
 if and only if for all $k \in \N$ and for all  $A\in \A_k$ we have 
$\lim_{n \to \infty} \prob{\mathbf{X}_n^{[k]} =A}=t_{=}(A,W)$.

\begin{lemma}\label{lemma_homkonv_konv_in_dist}
 Let $\mathbf{X}_n=\left( X_n(i,j) \right)_{i,j=1}^n$ be a random, vertex exchangeable element 
of  $\A_n$ for all $n \in \N$. The following statements are equivalent:
\begin{enumerate}[(a)]
\item $\mathbf{X}_n \toinp W$, that is
$\forall \, k \; \forall \, A \in \A_k: \;\; t_{=}(A, \mathbf{X}_n) \toinp t_{=}(A, W)$
\item  $\mathbf{X}_n \toind \mathbf{X}_W$, that is $\forall\,k \; \forall\, A\in \A_k: \; \; \lim_{n \to \infty}
\prob{\mathbf{X}_n^{[k]} =A}=t_{=}(A,W)$
\end{enumerate}
\end{lemma}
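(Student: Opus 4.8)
The plan is to prove both implications by relating the \emph{random} homomorphism density $t_=(A,\mathbf{X}_n)$ to the \emph{deterministic} probabilities $\prob{\mathbf{X}_n^{[k]}=A}$ through the underlying vertex exchangeability. The bridge is the elementary identity that, for every fixed $A\in\A_k$,
\begin{equation*}
\expect{t_=(A,\mathbf{X}_n)}=\frac{1}{n^k}\sum_{\varphi:[k]\to[n]}\prob{\forall i,j:\,A(i,j)=X_n(\varphi(i),\varphi(j))}=\prob{\mathbf{X}_n^{[k]}=A}+\Ordo(1/n),
\end{equation*}
which I would justify by observing that the $n(n-1)\cdots(n-k+1)=n^k(1+\Ordo(1/n))$ injective maps $\varphi$ each contribute exactly $\prob{\mathbf{X}_n^{[k]}=A}$ by \eqref{finite_array_exch}, while the remaining $\Ordo(n^{k-1})$ non-injective maps contribute at most $\Ordo(1/n)$ in total. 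For the direction (a) $\Rightarrow$ (b) this identity already suffices: since $t_=(A,\mathbf{X}_n)\in[0,1]$ is bounded, convergence in probability to the constant $t_=(A,W)$ forces convergence of the means by bounded convergence, and the identity then yields $\prob{\mathbf{X}_n^{[k]}=A}\to t_=(A,W)$, which is precisely (b).

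For (b) $\Rightarrow$ (a) I would establish the stronger statement that $\expect{t_=(A,\mathbf{X}_n)}\to t_=(A,W)$ and $\var{t_=(A,\mathbf{X}_n)}\to 0$, so that Chebyshev's inequality gives the required convergence in probability. The mean is handled by the displayed identity. For the second moment, expanding the square gives
\begin{equation*}
\expect{t_=(A,\mathbf{X}_n)^2}=\frac{1}{n^{2k}}\sum_{\varphi,\psi:[k]\to[n]}\prob{E_\varphi\cap E_\psi},
\end{equation*}
where $E_\varphi$ is the event that $\varphi$ induces $A$. Among the $n^{2k}$ ordered pairs $(\varphi,\psi)$, those in which both maps are injective with disjoint images form a fraction $n(n-1)\cdots(n-2k+1)/n^{2k}=1-\Ordo(1/n)$, the rest contributing at most $\Ordo(1/n)$. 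By vertex exchangeability every such generic pair yields the \emph{same} value $\prob{\mathbf{X}_n^{[2k]}\in\cS}$, where $\cS\subseteq\A_{2k}$ is the infinite set of adjacency matrices whose two diagonal $k\times k$ blocks both equal $A$.

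The crux is to pass to the limit in $\prob{\mathbf{X}_n^{[2k]}\in\cS}$, and here lies the main obstacle: $\cS$ is infinite, since the cross-block between the two groups of vertices carries arbitrarily many multi-edges, so (b)---which a priori only gives convergence of each individual atom $\prob{\mathbf{X}_n^{[2k]}=B}\to t_=(B,W)$---does not immediately permit interchanging the limit with the summation over $\cS$. I would remove this obstacle by Scheffé's lemma: for each fixed $\ell$ the maps $B\mapsto\prob{\mathbf{X}_n^{[\ell]}=B}$ and $B\mapsto t_=(B,W)=\prob{\mathbf{X}_W^{[\ell]}=B}$ are probability mass functions on the countable set $\A_\ell$, and pointwise convergence of the former to the latter (which is exactly (b)) automatically upgrades to convergence in total variation. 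Hence $\prob{\mathbf{X}_n^{[2k]}\in\cS}\to\prob{\mathbf{X}_W^{[2k]}\in\cS}$ for every event, in particular for $\cS$. Finally, because $\mathbf{X}_W$ is dissociated and vertex exchangeable (Definition \ref{def_X_W}), the block on $\{1,\dots,k\}$ and the block on $\{k+1,\dots,2k\}$ are independent and each distributed as $\mathbf{X}_W^{[k]}$, so $\prob{\mathbf{X}_W^{[2k]}\in\cS}=\prob{\mathbf{X}_W^{[k]}=A}^2=t_=(A,W)^2$. Combining the pieces gives $\expect{t_=(A,\mathbf{X}_n)^2}\to t_=(A,W)^2$, the variance vanishes, and (a) follows. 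Thus the only genuine difficulty is the exchange of limit and infinite summation, cleanly resolved by the total-variation upgrade; the bookkeeping of injective versus degenerate index maps is routine.
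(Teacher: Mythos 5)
Your proposal is correct and follows essentially the same route as the paper: the first-moment identity via exchangeability and the negligibility of non-injective maps for (a)~$\Rightarrow$~(b), and the second-moment/Chebyshev argument with dissociation of $\mathbf{X}_W$ yielding $t_=(A,W)^2$ for (b)~$\Rightarrow$~(a). Your explicit invocation of Scheff\'e's lemma to upgrade the pointwise convergence of the pmfs on the countable set $\A_{2k}$ to total-variation convergence (needed because the event that both diagonal blocks equal $A$ is an infinite subset of $\A_{2k}$) is a careful justification of a step the paper passes over with the bare reference to (b).
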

\begin{proof}
 We are going to use the fact $\lim_{n \to \infty} \frac{ n\cdot (n-1) \dots (n-k+1)}{n^k} =1$ many times in this proof.

We first prove $(a) \implies (b)$:
\begin{multline}\label{a_bol_b}
\lim_{n \to \infty} \prob{ \mathbf{X}_n^{[k]} = A} \stackrel{\eqref{finite_array_exch}}{=} 
\lim_{n \to \infty} \frac{(n-k)!}{n!} \sum_{ \varphi: [k] \hookrightarrow [n]}
\prob{ \left(X_n(\varphi(i),\varphi(j))\right)_{i,j=1}^k =A}=\\
\lim_{n \to \infty} \frac{1}{n^k} \sum_{ \varphi: [k] \to [n]}
\prob{ \left(X_n(\varphi(i),\varphi(j))\right)_{i,j=1}^k =A} 
\stackrel{\eqref{hom_ind}}{=} \lim_{n \to \infty} \expect{t_=(A,\mathbf{X}_n)}
\stackrel{(a)}{=}
t_=(A,W)
\end{multline}

Now we prove $(b) \implies (a)$: The idea of this proof comes from Lemma 2.4 of \cite{Lovasz_Szegedy_2006}.

From $(b)$ we get $\expect{t_=(A, \mathbf{X}_n)} \to t_=(A,W)$ for all $A$ by the argument used in \eqref{a_bol_b}.
 In order to have
$t_=(A, \mathbf{X}_n) \toinp t_=(A,W)$ we only need to show 
\[ \lim_{n \to \infty} \var{t_=(A, \mathbf{X}_n)}= \lim_{n \to \infty} 
\expect{t_=(A, \mathbf{X}_n)^2} - t_=(A,W)^2 =0.  \]
This follows by the computation
\begin{multline*}
 \lim_{n \to \infty} \expect{t_=(A, \mathbf{X}_n)^2} \stackrel{\eqref{hom_ind}}{=}  \\
\lim_{n \to \infty} \frac{1}{n^{2k}} \sum_{\varphi:[2k] \to [n]}
\prob{ A= \left( X_n(\varphi(i),\varphi(j)) \right)_{i,j=1}^k, \; A=\left( X_n(\varphi(i),\varphi(j)) \right)_{i,j=k+1}^{2k}}= \\
\lim_{n \to \infty} \frac{(n - 2k)! }{n!} \sum_{\varphi:[2k] \hookrightarrow [n]}
\prob{ A= \left( X_n(\varphi(i),\varphi(j)) \right)_{i,j=1}^k, \; A=\left( X_n(\varphi(i),\varphi(j)) \right)_{i,j=k+1}^{2k}}
\stackrel{\eqref{finite_array_exch}}{=}  \\
\lim_{n \to \infty} 
\prob{ A= \left( X_n(i,j) \right)_{i,j=1}^k, \; A=\left( X_n(i,j) \right)_{i,j=k+1}^{2k}}\stackrel{(b)}{=}\\
\prob{ A= \left( X_W(i,j) \right)_{i,j=1}^k, \; A=\left( X_W(i,j) \right)_{i,j=k+1}^{2k}}
\stackrel{(*)}{=} t_=(A,W)^2
\end{multline*}
In the equation $(*)$ we used  the fact that $\mathbf{X}_W$ is dissociated and \eqref{homind_grafon_valszamosan}.
\end{proof}

$ $

Recall that for a real-valued nonnegative random variable $X$ we denote
$\expect{ X ; m}:= \expect{ X \cdot \ind[ X \geq m]}$.
A sequence of  real-valued nonnegative random variables $\left(X_n\right)_{n=1}^{\infty}$ is uniformly integrable
 (see Chapter 13 of \cite{williams}) if
\begin{equation*}
\lim_{m \to \infty} \max_{n} \expect{ X_n ; m}=0.
\end{equation*}

Now we state and prove a lemma in which we give sufficient conditions under which 
$\tilde{\mathbf{X}}_n \toind \mathbf{X}$ implies $\frac{1}{n}d(\tilde{\mathbf{X}}_n,i) \toind D(\mathbf{X},i)$.
Note that some extra conditions are indeed needed, because it might happen that
 very few pairs of vertices of $\tilde{\mathbf{X}}_n$ with a huge number of
parallel edges between them remain invisible if we only sample small subgraphs of $\tilde{\mathbf{X}}_n$, but still
 cause a sigificant distortion in the distribution of the degrees of vertices in $\tilde{\mathbf{X}}_n$.
This phenomenon is related to the fact that weak convergence of a sequence of random variables $X_n \toind X$ does not
necessarily imply the convergence of the means of $X_n$ to that of  $X$: the uniform integrability of $\left(X_n\right)_{n=1}^{\infty}$
is a sufficient (and essentially necessary) condition that rules out pathological behavior.

\begin{lemma}\label{lemma_uniform_integrabiliy_whole}

$ $

\begin{enumerate}[(i)]
\item \label{lemma_uniform_integrabiliy_a}
If $\left(\mathbf{X}_n\right)_{n=1}^{\infty}$ is a sequence of infinite vertex exchangeable arrays,  the sequence \newline
$\left( X_n(1,2)\right)_{n=1}^{\infty}$  is uniformly integrable and  $\mathbf{X}_{n} \toind \mathbf{X}$,
 then  for all $k \in \N$ we have
\begin{equation}\label{matrix_and_degrees_converge}
\left( \mathbf{X}_{n}^{[k]},
 \left( D(\mathbf{X}_{n},i)\right)_{i=1}^k \right) \toind
 \left( \mathbf{X}^{[k]},
 \left( D(\mathbf{X},i)\right)_{i=1}^k \right).
\end{equation}
\item \label{lemma_uniform_integrabiliy_b}
If  $\tilde{\mathbf{X}}_n$ is a random, vertex exchangeable element of $\A_n$ for each $n \in\N$, 
$\tilde{\mathbf{X}}_n \toind \mathbf{X}$ holds for some infinite vertex exchangeable array $\mathbf{X}$
and  the sequences $\left(\tilde{X}_n(1,1)\right)_{n=1}^{\infty}$ and
 $\left(\tilde{X}_n(1,2)\right)_{n=1}^{\infty}$ are uniformly integrable then 
 for all $k \in \N$
\begin{equation}\label{matrix_and_degrees_converge_finite}
 \left( \tilde{\mathbf{X}}_n^{[k]},
 \left( \frac{1}{n} d(\tilde{\mathbf{X}}_{n},i)\right)_{i=1}^k \right) \toind
 \left( \mathbf{X}^{[k]},
 \left( D(\mathbf{X},i)\right)_{i=1}^k \right).
\end{equation}
\end{enumerate}
\end{lemma}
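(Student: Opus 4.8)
The plan is to approximate, uniformly in $n$, the full average degree by a partial average over a fixed finite window of neighbours, reduce to the already-available convergence $\mathbf{X}_n\toind\mathbf{X}$ of finite sections, and then pass to the limit via a standard approximation lemma for convergence in distribution. For fixed $N$ introduce the partial average $D_N(\mathbf{X}_n,i):=\frac1N\sum_{j=1}^N X_n(i,j)$. The heart of both parts is the uniform estimate that, for every $\eps>0$ and every index $i$,
\[
\lim_{N\to\infty}\sup_n \prob{\, \abs{D_N(\mathbf{X}_n,i)-D(\mathbf{X}_n,i)}>\eps \,}=0
\]
in case (\ref{lemma_uniform_integrabiliy_a}), and the analogous statement with $D(\mathbf{X}_n,i)$ replaced by the full average $\frac1n d(\tilde{\mathbf{X}}_n,i)$ and $\sup_n$ replaced by $\limsup_n$ in case (\ref{lemma_uniform_integrabiliy_b}). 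I would prove this by truncation. Writing $X_n(i,j)=X_n(i,j)\ind[X_n(i,j)<m]+X_n(i,j)\ind[X_n(i,j)\ge m]$, the tail part contributes in expectation at most $\sup_n\expect{X_n(1,2);m}$ (vertex exchangeability reduces every off-diagonal entry to the $(1,2)$ entry), which tends to $0$ as $m\to\infty$ by uniform integrability, uniformly in $n$; hence by Markov the tail part is negligible uniformly in $n$ once $m$ is large. For the truncated part, bounded by $m$, I would exploit that for fixed $i$ the entries $(X_n(i,j))_{j\ne i}$ form an exchangeable family, so the partial averages form a reverse martingale and increments are orthogonal; this yields $\expect{(A_N-A)^2}\le m^2/N$ for the window-$N$ average $A_N$ versus the limiting (or full) average $A$, uniformly in $n$, and Chebyshev closes the estimate.

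With this in hand, fix $k$ and set $M=\max(k,N)$. For fixed $N$ the map $B\mapsto\big(B^{[k]},(\frac1N\sum_{j=1}^N B(i,j))_{i=1}^k\big)$ is a fixed deterministic function of $\mathbf{X}_n^{[M]}$ into the product of the countable space $\A_k$ and $\R^k$, so from $\mathbf{X}_n^{[M]}\toind\mathbf{X}^{[M]}$ (the hypothesis $\mathbf{X}_n\toind\mathbf{X}$) and the continuous mapping theorem we obtain, as $n\to\infty$,
\[
\left(\mathbf{X}_n^{[k]},\left(D_N(\mathbf{X}_n,i)\right)_{i=1}^k\right)\toind\left(\mathbf{X}^{[k]},\left(D_N(\mathbf{X},i)\right)_{i=1}^k\right).
\]
Letting $N\to\infty$, the right-hand side converges in distribution to $\big(\mathbf{X}^{[k]},(D(\mathbf{X},i))_{i=1}^k\big)$, since $D_N(\mathbf{X},i)\to D(\mathbf{X},i)$ almost surely by de Finetti's theorem and the conditional strong law, as recorded in \eqref{def_D_X_lim}. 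Combining this with the uniform estimate of the previous step through the approximation lemma (if $Z_{n,N}\toind Z_N$ as $n\to\infty$, $Z_N\toind Z$ as $N\to\infty$, and $\lim_N\limsup_n\prob{\,\abs{Z_{n,N}-Z_n}>\eps\,}=0$ for all $\eps>0$, then $Z_n\toind Z$) yields \eqref{matrix_and_degrees_converge}. Note that the $\A_k$-coordinate $\mathbf{X}_n^{[k]}$ is identical in the approximating and target vectors, so the approximation lemma only has to be verified for the degree coordinates, which is exactly the first step.

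Part (\ref{lemma_uniform_integrabiliy_b}) is the same scheme applied to the finite arrays $\tilde{\mathbf{X}}_n\in\A_n$, with the full average now being $\frac1n d(\tilde{\mathbf{X}}_n,i)=\frac1n\sum_{j=1}^n \tilde X_n(i,j)$. Two points differ. The sum includes the diagonal term $\frac1n\tilde X_n(i,i)$, which is negligible in probability because $\expect{\frac1n \tilde X_n(i,i)}=\frac1n\expect{\tilde X_n(1,1)}\to0$, the uniform integrability of $\tilde X_n(1,1)$ bounding its mean; hence it can be discarded. The off-diagonal terms $(\tilde X_n(i,j))_{j\le n,\,j\ne i}$ again form a finite exchangeable family of length $n-1$, for which the reverse-martingale orthogonality bound $\expect{(A_N-A_{n-1})^2}\le m^2/N$ holds verbatim; the only change is that the resulting control is of the form $\lim_N\limsup_n$ rather than $\sup_n$, which is precisely what the approximation lemma requires. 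With these adjustments the three-step scheme gives \eqref{matrix_and_degrees_converge_finite}.

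The \emph{main obstacle} is the uniform-in-$n$ approximation of the degree by a bounded finite window. Weak convergence of finite sections says nothing about the contribution of a few pairs carrying a huge number of parallel edges, which is exactly the pathology the uniform integrability hypotheses are designed to exclude. Making the estimate uniform in $n$ rather than merely true for each fixed $n$ is what forces the combination of a truncation at a level $m$ chosen from uniform integrability with a variance bound that is itself uniform in $n$, and this is the only place where both hypotheses are genuinely used.
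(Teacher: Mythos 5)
Your proof is correct, but it follows a genuinely different route from the paper's. For part (i) the paper argues by the method of moments: it first assumes $\prob{X_n(i,j)\le m}\equiv 1$, rewrites the mixed moments of $\bigl( \mathbf{X}_n^{[k]}, ( D(\mathbf{X}_n,i))_{i=1}^k \bigr)$ as expectations of polynomials in finitely many entries using vertex exchangeability and dominated convergence (their identity \eqref{atirt_momentumok}, replacing each factor $D(\mathbf{X},i)^{\nu_i}$ by a product of entries $X(i,j)$, $j\in J(i)$, over disjoint index sets), and only then removes boundedness by truncation and an $\varepsilon/3$-argument with bounded continuous test functions. For part (ii) the paper does not work with the finite array directly: it samples i.i.d.\ uniform indices $\eta^n_i$ on $[n]$, sets $X_n(i,j):=\tilde X_n(\eta^n_i,\eta^n_j)$ so that $D(\mathbf{X}_n,i)=\frac1n d(\tilde{\mathbf{X}}_n,\eta^n_i)$, checks uniform integrability of $X_n(1,2)$ via the same mixing identity you note, and reduces (ii) to (i) by conditioning on the sampled indices being distinct. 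Your scheme --- window averages $D_N$, a truncation plus an $L^2$ bound uniform in $n$, and Billingsley's double-limit approximation theorem --- replaces both the moment computation and the sampling device, treats (i) and (ii) by one mechanism, and avoids moment arguments altogether; what the paper's sampling trick buys in exchange is that (ii) becomes a corollary of (i), with no separate estimate needed for finite exchangeable families. Two small points you should patch. First, for a \emph{finite} exchangeable family the conditional-i.i.d./reverse-martingale picture does not hold verbatim; your bound survives via the elementary covariance identity $\expect{(A_N-A_M)^2}=(v-c)\left(\frac1N-\frac1M\right)$, where $v$ is the variance of one entry and $c$ the covariance of two distinct entries, together with $c\ge -v/(M-1)$ (nonnegativity of the variance of the total sum), giving $\expect{(A_N-A_M)^2}\le 2v/N\le m^2/N$ for entries bounded by $m$. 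Second, in part (i) your window $D_N(\mathbf{X}_n,i)=\frac1N\sum_{j=1}^N X_n(i,j)$ contains the diagonal term $\frac1N X_n(i,i)$, which is not controlled by the uniform integrability of $X_n(1,2)$ and is not part of the exchangeable row $(X_n(i,j))_{j\ne i}$; either exclude $j=i$ from the window (this does not change any limit) or observe that tightness of $(X_n(i,i))_{n\ge1}$, which follows from $\mathbf{X}_n\toind\mathbf{X}$, makes this term negligible uniformly in $n$ as $N\to\infty$. With these repairs your proof is complete.
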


\begin{proof}

$ $

Proof of \eqref{lemma_uniform_integrabiliy_a}:  We first prove that \eqref{matrix_and_degrees_converge} holds if we further assume
 $\prob{X_{n}(i,j) \leq m}\equiv 1$ for some $m \in \N$. 
  By the method of moments we only need to show that for all $\mu_{i,j} \in \N_0$, $1\leq i\leq j \leq k$ and $\nu_i \in \N_0$, $1 \leq i \leq k$ 
we have
 \begin{equation}\label{degree_conv_moments1}
 \lim_{n \to \infty} \expect{
 \prod_{  i\leq j \leq k} X_n(i,j)^{\mu_{i,j}} \cdot \prod_{i=1}^k D(\mathbf{X}_{n},i)^{\nu_i}}=
 \expect{
 \prod_{  i\leq j \leq k} X(i,j)^{\mu_{i,j}} \cdot \prod_{i=1}^k D(\mathbf{X},i)^{\nu_i}}.
 \end{equation}

 For every $i \in [k]$ choose $J(i) \subseteq \N$ such that for all $i$ we have $\abs{J(i)}=\nu_i$ and $J(i) \cap [k]=\emptyset$, moreover
 for all
$i \neq i'$ we have $J(i)\cap J(i') = \emptyset$.
 In order to prove
\eqref{degree_conv_moments1} we first show that if $\prob{X(i,j) \leq m}\equiv 1$ for some $m \in \N$ then 
\begin{equation}\label{atirt_momentumok}
\expect{
 \prod_{  i\leq j \leq k} X(i,j)^{\mu_{i,j}} \cdot \prod_{i=1}^k D(\mathbf{X},i)^{\nu_i}}=
     \expect{
 \prod_{  i\leq j \leq k} X(i,j)^{\mu_{i,j}} \cdot \prod_{i=1}^k \prod_{j \in J(i)} X(i,j)}
\end{equation}
Denote by $\nu=\sum_{i=1}^k \nu_i$ and  $\ul{\nu}:= \{ (i,l)\,:\, i \in [k], \, l \in [\nu_i] \}$ and 
$\mathbf{X}^{[k],\mu}:=\prod_{  i\leq j \leq k} X(i,j)^{\mu_{i,j}}$.
 Using \eqref{def_D_X_lim} and  dominated convergence,
 the left-hand side of \eqref{atirt_momentumok} is equal to
\begin{multline*}
\lim_{ n \to \infty} 
\expect{    \mathbf{X}^{[k],\mu} \prod_{i=1}^k \left( \frac{1}{n} \sum_{j=1}^n X(i,j)  \right)^{\nu_i}} = \\
\lim_{ n \to \infty} \frac{1}{n^{\nu}}
\sum_{j: \, \ul{\nu} \to [n]} \expect{  \mathbf{X}^{[k],\mu}  \prod_{i=1}^k \prod_{l=1}^{\nu_i} X(i,j(i,l))}=\\
\lim_{ n \to \infty} \frac{1}{n^{\nu}}
\sum_{j:\, \ul{\nu} \hookrightarrow [k..n]} \expect{  \mathbf{X}^{[k],\mu}  \prod_{i=1}^k \prod_{l=1}^{\nu_i} X(i,j(i,l))}
\stackrel{\eqref{vertex_exch}}{=}\\
\lim_{ n \to \infty} \frac{1}{n^{\nu}}
\sum_{j:\, \ul{\nu} \hookrightarrow [k..n]} \expect{  \mathbf{X}^{[k],\mu}   \prod_{i=1}^k \prod_{j' \in J(i)} X(i,j') }
\end{multline*}
 Now the right-hand side of the above equation is
easily shown to be equal to the right-hand side of \eqref{atirt_momentumok}.

Having established \eqref{atirt_momentumok}, our assumptions $\mathbf{X}_n \toind \mathbf{X}$ and
$\prob{X_{n}(i,j) \leq m}\equiv 1$ imply  the equality \eqref{degree_conv_moments1}: if we rewrite both the
 left and the right hand side of \eqref{degree_conv_moments1} in the form corresponding to the right hand side of 
\eqref{atirt_momentumok}, then we only need to check that the expected value of a polynomial function of finitely many values 
of $\mathbf{X}_n$ converge, and this follows from the definition of
$\mathbf{X}_n \toind \mathbf{X}$ (for details on $\toind$, see \cite{billingsley}).

Having established \eqref{matrix_and_degrees_converge} under the condition
 $\prob{X_{n}(i,j) \leq m}\equiv 1$ we now prove \eqref{matrix_and_degrees_converge} without assuming 
this condition.
If we define the truncated array
$X^m(i,j):=\min\{ X(i,j), m\}$, then
 for each $m \in \N$ we have $\mathbf{X}^m_n \toind \mathbf{X}^m$ from which
\begin{equation}\label{truncated_toinp}
\left( \mathbf{X}_n^{m,[k]},
 \left( D(\mathbf{X}_n^m,i)\right)_{i=1}^k \right) \toind
\left( \mathbf{X}^{m,[k]},
 \left( D(\mathbf{X}^m,i)\right)_{i=1}^k \right)
\end{equation}
follows by the previous argument. By uniform integrability for every $\varepsilon>0$ there is an $m$ such that for all $n$ we have
\begin{equation}\label{unif_int_epsilon_trunc}
 \expect{ D(\mathbf{X}_n,i)-D(\mathbf{X}_n^m,i)}\stackrel{\eqref{degree_W_expect}}{=}
\expect{ X(1, 2)-\min\{ X(1,2),m\}} \leq \varepsilon. 
\end{equation}
 
It follows from Fatou's lemma that $\expect{ D(\mathbf{X},i)-D(\mathbf{X}^m,i)} \leq \varepsilon$ also holds.

In order to prove \eqref{matrix_and_degrees_converge} we only need to check
\begin{equation*}
\lim_{n \to \infty} \expect{ f  \left( \mathbf{X}_n^{[k]},
 \left( D(\mathbf{X}_n,i)\right)_{i=1}^k \right)}=
\expect{ f\left( \mathbf{X}^{[k]},
 \left( D(\mathbf{X},i)\right)_{i=1}^k \right)}
\end{equation*}
for any bounded and continuous $f: \A_k \times [0,+\infty)^k \to \R$. This can be easily proved
using \eqref{truncated_toinp}, \eqref{unif_int_epsilon_trunc} and the 
$\varepsilon/3$-argument (see Chapter 1.5 of \cite{reedsimon}). This finishes the proof of 
\eqref{lemma_uniform_integrabiliy_a}.

$ $

Proof of \eqref{lemma_uniform_integrabiliy_b}:
For each $n \in \N$ let $\left( \eta^n_i \right)_{i=1}^{\infty}$ be i.i.d. and uniformly distributed on $[n]$. 
Define the infinite array $\mathbf{X}_n$ by $X_n(i,j):=\hat{X}_n(\eta^n_i,\eta^n_j)$. Now $\mathbf{X}_n$ is vertex exchangeable
and using the vertex exchangeability of $\tilde{\mathbf{X}}_n$ we get
\[ \expect{ X_n(1,2) ; m}=(1-\frac{1}{n}) \expect{ \tilde{X}_n(1,2) ; m} + \frac{1}{n}\expect{ \tilde{X}_n(1,1) ; m}, \]
 end if we combine this with the assumptions of  \eqref{lemma_uniform_integrabiliy_b} we get that
 $\left( X_n(1,2)\right)_{n=1}^{\infty}$  is uniformly integrable.

Note that by \eqref{def_D_X_lim} and the law of large numbers have $D(\mathbf{X}_n,i)=\frac{1}{n} d(\tilde{\mathbf{X}}_n, \eta^n_i)$. Using 
the vertex exchangeability of $\tilde{\mathbf{X}}_n$ we get that the following two $\left( \A_k,\R_+^k \right)$-valued random variables have
the same distribution:
\begin{itemize}
\item  $\left( \mathbf{X}_n^{[k]}, \left( D(\mathbf{X}_n,i)\right)_{i=1}^k \right)$ 
 under the condition  $ \abs{ \{ \eta_1^n, \dots, \eta_k^n \}}=k $ 
\item $\left( \tilde{\mathbf{X}}_n^{[k]}, \left( \frac{1}{n} d(\tilde{\mathbf{X}}_n,i)\right)_{i=1}^k \right)$
\end{itemize}
Let us call this fact $(*)$.

$\mathbf{X}_n \toind \mathbf{X}$ easily follows from $\tilde{\mathbf{X}}_n \toind \mathbf{X}$, $(*)$ and
\begin{equation}\label{separation_eq}
 \lim_{n \to \infty} \prob{ \abs{ \{ \eta_1^n, \dots, \eta_k^n \}}=k} =1,
\end{equation}
 so we can apply
\eqref{lemma_uniform_integrabiliy_a} to obtain \eqref{matrix_and_degrees_converge}. Now using
$(*)$ and \eqref{separation_eq} again we obtain 
\eqref{matrix_and_degrees_converge_finite}.

\end{proof}

\section{Random urn configurations and edge stationarity}
\label{section_stationary_state}

In this section we define a way of constructing random adjacency matrices using random urn configurations 
(the basic idea comes from Section 3.4 of \cite{randomlygrown}). This construction relates edge stationary random
adjacency matrices to ball exchangeable urn models and gives an easy proof of  Lemma \ref{lemma_stationary_distribution}
 using the fact that the distribution
of the $\text{PAG}_{\kappa}(n,m)$ and that of the stationary state of the edge reconnecting model both arise 
from the P\'olya urn model via our construction.

In Subsection \ref{subsection_limits_edge_stac_thm12} we prove
  Theorem \ref{thm_graph_lim_of_edge_stationary} and Theorem \ref{thm_stac_graphlim} using this machinery.

$ $

Let $n,m \in \N$. A random urn configuration with $2m$ balls of $n$ different colors is a probability distribution on $[n]^{[2m]}$, that is
a random function $\Psi: [2m] \to [n]$. If $l \in [2m]$ we say that the $l$'th ball has color $\Psi(l)$.
Let $d(\Psi,i):=\sum_{l=1}^{2m} \ind[ \Psi(l)=i]$ for $i \in [n]$ denote the multiplicity of color $i$ in $\Psi$.

We say that a random urn configuration $\Psi$ is \emph{ball exchangeable} if for all permutations $\tau: [2m] \to [2m]$ we have
\[ \left( \Psi(l) \right)_{l=1}^{2m} \sim \left( \Psi(\tau(l)) \right)_{l=1}^{2m}. \]
$\Psi$ is ball exchangeable if and only if the following property holds: conditioned on the value of the \emph{type vector}
$\left( d(\Psi,i)  \right)_{i=1}^n$, the distribution of $\Psi$ is uniform on the elements of $[n]^{[2m]}$ with this particular
 type vector, more precisely if $\psi \in [n]^{[2m]}$ then
 \begin{equation*}
 \prob{ \Psi=\psi}= \frac{ \prob{ \left(d(\Psi,i)\right)_{i=1}^n =  \left(d(\psi,i)\right)_{i=1}^n}}
{ \left( \frac{ (2m)!}{ \prod_{i=1}^n d(\psi,i)!} \right)}
 \end{equation*}

We say that $\Psi$ is \emph{color exchangeable} if for all permutations $\tau: [n] \to [n]$ we have
\[ \left( \Psi(l) \right)_{l=1}^{2m} \sim \left( \tau(\Psi(l)) \right)_{l=1}^{2m}. \]

To a random urn configuration $\Psi$ we assign a random element $\mathbf{X}$ of $\A_n^m$ by  defining
\begin{equation}\label{from_urn_to_adjmatrix}
  X(i,j):= \sum_{e=1}^m \ind [ \Psi(2e-1)=i, \Psi(2e)=j] + \ind [ \Psi(2e-1)=j, \Psi(2e)=i]
\end{equation}
for all $i,j \in [n]$. In plain words: the colors of the balls correspond to the labels of the vertices and if
 for any $1 \leq e \leq m$ we see a ball of color $i$ at position $2e-1$ and a ball of color $j$ at position $2e$  
then we draw an edge between the vertices $i$ and $j$ in the corresponding labeled multigraph
 (and if $i=j$ then we draw a loop edge at vertex $i$).

With the definition \eqref{from_urn_to_adjmatrix} we have
$\prob{ d(\mathbf{X},i)= d(\Psi,i)}=1$.
 It is easy to see that all probability measures on $\A_n^m$ arise this way.

  If $\Psi$ is color exchangeable then
 $\mathbf{X}$ is vertex exchangeable. All vertex exchangeable probability measures on $\A_n^m$ arise this way.

If $\Psi$ is ball exchangeable then for all $B \in \A_n^m$ we have
\begin{equation}\label{edge_stationarity_eq}
\prob{ \mathbf{X}=B}=
\frac{
\prob{ \left(d(\mathbf{X},i)\right)_{i=1}^n =  \left(d(B,i)\right)_{i=1}^n}}
{ \left( \frac{ (2m)!}{ \prod_{i=1}^n d(B,i)!} \right)}
 \frac{m! 2^{m'(B)}}{\left(\prod_{i<j} B(i,j)!\right)
  \left(\prod_{i=1}^n \frac{B(i,i)}{2}! \right)}
\end{equation}
where $m'(B)$ denotes the number of non-loop edges. The first term in \eqref{edge_stationarity_eq} is $\prob{ \Psi=\psi}$ for
some $\psi$ that produces $B$ via \eqref{from_urn_to_adjmatrix}, the second term is the number of elements of $[n]^{[2m]}$ that produce
$B$ via \eqref{from_urn_to_adjmatrix}.

Recalling the definition of the configuration model (see Section \ref{introduction}) we can see that if we generate  $\mathbf{X}$
using \eqref{from_urn_to_adjmatrix} from a ball exchangeable urn configuration $\Psi$ with a given  degree sequence
$\left(d_i\right)_{i=1}^n$ then we in fact uniformly choose one from the the set of possible matchings of the stubs where vertex $i \in [n]$
has $d_i$ stubs. Thus \eqref{edge_stationarity_eq} holds for a random element $\mathbf{X}$ of $\A_n^m$ if and only if 
the distribution of $\mathbf{X}$ is edge stationary. It is easy to see that all edge-stationary probability distributions 
on $\A_n^m$ arise from ball exchangeable
distributions on $[n]^{[2m]}$ via \eqref{from_urn_to_adjmatrix}.

Now we define two different dynamics on random urn configurations:
\begin{itemize}
\item \emph{The P\'olya urn model:} Fix $\kappa \in (0,+\infty)$. Let $\Psi_L$ be a random element of $[n]^{[L]}$. Given
$\Psi_L$ we generate a random element of $[n]^{[L+1]}$ which we denote by $\Psi_{L+1}$ in the following way: let
$\Psi_{L+1}(l) := \Psi_L(l)$ for all $l \in [L]$ and
\[ \forall \, i \in [n]: \;\;  \condprob{\Psi_{L+1}(L+1)=i}{ \Psi_L}= \frac{ d( \Psi_L, i)+\kappa}{ L+ n\kappa} \]
\item \emph{The ball replacement model:} Fix $\kappa \in (0,+\infty)$. Let $\Psi_T$ be a random element of
$[n]^{[2m]}$. Given $\Psi_T$ we generate a random element of $[n]^{[2m]}$
  which we denote by $\Psi_{T+1}$ in the following way: let $\xi_T$ denote a uniformly chosen element of $[2m]$.
  For all $l \in [2m] \setminus \xi_T$ let $\Psi_{T+1}(l):=\Psi_{T}(l)$ and
\begin{equation}\label{ball_replacement_rule}
 \forall \, i \in [n]:\;\;  \condprob{\Psi_{T+1}(\xi_T)=i}{ \Psi_T, \xi_T}= \frac{ d( \Psi_T, i)+\kappa}{ 2m+ n\kappa}
 \end{equation}
\end{itemize}
It is well-known that if we start with an empty urn $\Psi_0$ and repeatedly apply the P\'olya urn scheme to get
$\Psi_L$ for $L=1,2,\dots,2m$ then the distribution of $\Psi_{2m}$ is of the following form:
\begin{equation}\label{polya_distribution}
 \forall\, \psi \in [n]^{[2m]}:\;\; \prob{ \Psi_{2m}=\psi}=\frac{\prod_{i=1}^n
\prod_{j=1}^{d(\psi,i)}(\kappa+j-1)}{ \prod_{j=1}^{2m}(\kappa n
+j-1)}
\end{equation}
Thus the distribution of $\Psi_{2m}$ is ball and color exchangeable.
The $\text{PAG}_{\kappa}(n,m)$ (defined in Section \ref{introduction}) is in fact the random multigraph
 obtained as  the image of the random urn configuration \eqref{polya_distribution}
 under the mapping \eqref{from_urn_to_adjmatrix}.

 The ball replacement model is  an
$[n]^{[2m]}$-valued Markov chain, which is irreducible and aperiodic with
unique stationary distribution \eqref{polya_distribution}: if we delete the $\xi_T$'th ball from $\Psi_{2m}$, then by
 ball exchangeability
  the distribution of the resulting $[n]^{[2m-1]}$-valued random variable is the same as deleting the $2m$'th ball:
   P\'olya-$\Psi_{2m-1}$. Thus replacing the removed $\xi_T$'th ball with a new one according
   to \eqref{ball_replacement_rule} we get a $[n]^{[2m]}$-valued random variable
   with  P\'olya-$\Psi_{2m}$ distribution again by ball exchangeability. 

Now consider the ball replacement Markov chain $\Psi_T$, $T=0,1,\dots$ with $\Psi_0$ being an arbitrary $[n]^{[2m]}$-valued random variable.
If we use the mapping \eqref{from_urn_to_adjmatrix} to create $\mathbf{X}(T)$ from $\Psi_T$, then it is easily seen that the resulting $\A_n^m$-valued
stochastic process $\mathbf{X}(T)$, $T=0,1,\dots$ evolves according to the rules of the edge reconnecting Markov chain defined in Section
\ref{introduction}. Some consequences of this fact:
\begin{itemize}
\item
If the distribution of $\Psi_0$ is ball exchangeable then $\Psi_T$ is also ball exchangeable for all $T$,
thus if
$\mathbf{X}(0)$ is edge stationary then $\mathbf{X}(T)$ is also edge stationary for all $T$
(hence the name ``edge stationarity").
\item The distribution \eqref{polya_distribution} is stationary for the ball replacement model, thus
the image of this distribution under the mapping \eqref{from_urn_to_adjmatrix} is the unique stationary
distribution of the edge reconnecting model. Lemma \ref{lemma_stationary_distribution} follows from
 \eqref{polya_distribution} and \eqref{edge_stationarity_eq}.
\end{itemize}

\subsection{Limits of edge stationary multigraph sequences}
\label{subsection_limits_edge_stac_thm12}

The key result of this subsection is Lemma \ref{lemma_edge_stac_graphlim} which can be roughly summarized as follows:
in a large dense edge stationary random multigraph the number of edges connecting the vertices $v$ and $w$ has Poisson distribution
 with parameter proportional to $d(v)d(w)$.
Given  Lemma \ref{lemma_edge_stac_graphlim}  the proof of Theorem \ref{thm_graph_lim_of_edge_stationary} is straightforward 
and the proof of
 Theorem \ref{thm_stac_graphlim} reduces to a 
limit theorem which states that the rescaled number of balls with color $1,2,\dots,k$ in the P\'olya urn model
converge in distribution to i.i.d. random variables with Gamma distribution.

$ $

\begin{lemma}\label{lemma_edge_stac_graphlim}
 Let $F: [0,+\infty) \to [0,1]$ denote the cumulative distribution
function of a nonnegative random variable $Z$. 
Let $F^{-1}(u):=\min \{x:F(x)\geq u \}$.
Let $Z_1,Z_2, \dots$ be i.i.d. random variables with $Z_i \sim Z \sim F^{-1}(U_i)$ (where $U_i$ are uniform on $[0,1]$).

If $\mathbf{X}_n$ is an $\A_n$-valued random variable for $n=1,2,\dots$, moreover the distribution of
$\mathbf{X}_n$ is vertex exchangeable and edge stationary, and 
\begin{equation}\label{edge_stac_rho_asymp}
 \frac{2m(\mathbf{X}_n)}{ n^2} \toinp \rho, \qquad   n \to \infty,
\end{equation}
 where $0< \rho<+\infty$ is positive real parameter, moreover  for all $k \in \N$ we have
 \begin{equation}\label{edge_stac_conv_Z_assumpt}
 \left( \frac{1}{n} d(\mathbf{X}_n,i) \right)_{i=1}^k \toind \left( Z_i \right)_{i=1}^k, \qquad   n \to \infty
 \end{equation}
 then $\mathbf{X}_n \toinp W$ where
 \begin{equation}
  \label{edge_stac_graphlim}
W(x,y,k) =
 \left\{
\begin{array}{ll}
\mypoi(k,\frac{F^{-1}(x)F^{-1}(y)}{\rho}) & \mbox{ if $x \neq y$}\\
\ind [2|k]\cdot \mypoi \left(\frac{k}{2},\frac{F^{-1}(x)F^{-1}(y)}{2\rho} \right) & \mbox{ if $x= y$}
\end{array} \right.
 \end{equation}
 \end{lemma}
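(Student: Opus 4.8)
The plan is to deduce the statement from Lemma~\ref{lemma_homkonv_konv_in_dist}, which reduces $\mathbf{X}_n \toinp W$ to convergence in distribution of finite blocks, $\mathbf{X}_n \toind \mathbf{X}_W$; concretely, by \eqref{homind_grafon_valszamosan} it suffices to show that for every $k$ and every $A \in \A_k$ we have $\lim_{n\to\infty}\prob{\mathbf{X}_n^{[k]}=A}=t_{=}(A,W)$. I first record the shape of the target. Since $U_i$ is uniform, $F^{-1}(U_i)\sim Z_i$, so by \eqref{X_W_indep_prod_formula} and \eqref{edge_stac_graphlim} the block $\mathbf{X}_W^{[k]}$ is a \emph{mixed Poisson array}: conditionally on $(Z_i)_{i=1}^k$ its entries are independent, with $X_W(i,j)\sim \text{POI}(Z_iZ_j/\rho)$ for $i<j$ and $X_W(i,i)/2 \sim \text{POI}(Z_i^2/(2\rho))$. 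Thus $t_{=}(A,W)$ equals the expectation over $(Z_i)_{i=1}^k$ of the corresponding product of Poisson weights, and the strategy is to condition on the degrees and invoke edge stationarity to reduce everything to the configuration model.

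The core of the argument is a conditional Poisson limit theorem for the configuration model. By edge stationarity and \eqref{edge_stationarity_eq}, conditionally on the degree sequence $(d(\mathbf{X}_n,i))_{i=1}^n$ the law of $\mathbf{X}_n$ is that of the configuration model, and by exchangeability of the uniform stub matching the conditional law of the block $\mathbf{X}_n^{[k]}$ depends only on $(d_1,\dots,d_k,2m)$, where I abbreviate $d_i=d(\mathbf{X}_n,i)$ and $2m=\sum_i d_i$. I would then compute the joint factorial moments of the block entries by counting matched stub-configurations: for pairwise distinct edges between the vertices $i<j\le k$ and distinct loops at the vertices $i\le k$, the expected number is a ratio whose numerator is a product of falling factorials of the $d_i$ and whose denominator is $(2m-1)(2m-3)\cdots$. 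Writing $d_i \approx z_i n$ and $2m \approx \rho n^2$, the leading order of the $r_{ij}$-th factorial moment of $X_n(i,j)$ is $(z_iz_j/\rho)^{r_{ij}}$ for $i<j$, and of $X_n(i,i)/2$ is $(z_i^2/(2\rho))^{r_{ii}}$; the joint factorial moments factorize because distinct edges and loops necessarily use disjoint stubs. These are exactly the joint factorial moments of independent Poisson variables, so the conditional method of moments yields convergence of $\mathbf{X}_n^{[k]}$ to the independent Poisson block. The method of moments is legitimate here because a product of ordinary (unmixed) Poisson laws is determined by its moments.

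It remains to integrate this conditional statement against the random, converging degrees. I would write $\prob{\mathbf{X}_n^{[k]}=A}=\expect{\psi_n(V_n)}$, where $V_n:=\big((d_i/n)_{i=1}^k,\,2m/n^2\big)$ and $\psi_n(V_n):=\condprob{\mathbf{X}_n^{[k]}=A}{(d_j)_{j=1}^n}$ is, by the previous paragraph, a function of $V_n$ alone. The factorial-moment asymptotics show that $\psi_n\to\psi$ locally uniformly, where $\psi\big((z_i)_{i=1}^k,\rho\big)=\prod_{i<j}\mypoi(A(i,j),z_iz_j/\rho)\cdot\prod_i \ind[2\,|\,A(i,i)]\,\mypoi(A(i,i)/2,z_i^2/(2\rho))$ is continuous. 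On the other hand, \eqref{edge_stac_conv_Z_assumpt} together with \eqref{edge_stac_rho_asymp} (and the fact that convergence in probability to a constant combines with convergence in distribution) gives $V_n\toind\big((Z_i)_{i=1}^k,\rho\big)$. Since $0\le\psi_n\le1$ and $(V_n)_n$ is tight, an extended continuous mapping / bounded convergence argument gives $\expect{\psi_n(V_n)}\to\expect{\psi\big((Z_i)_{i=1}^k,\rho\big)}=t_{=}(A,W)$, as required.

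I expect the main obstacle to lie in this last interchange of limit and expectation: the conditional factorial-moment estimates are only controlled when the rescaled degrees lie in a fixed compact set and $2m/n^2$ stays bounded away from $0$ and $\infty$, so I would make the local uniform convergence $\psi_n\to\psi$ precise, truncate to the event that $V_n$ lies in such a compact set, and bound the complementary contribution by its vanishing probability using $\psi_n\le1$. A secondary technical point is keeping the factorial-moment bookkeeping honest when several of the vertices $1,\dots,k$ share a colour in the stub count, since it is exactly the disjointness of the stubs used by distinct edges and loops that produces the product (independence) structure of the limiting array.
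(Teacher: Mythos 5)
Your proposal is correct, and its overall architecture coincides with the paper's: reduce $\mathbf{X}_n \toinp W$ to block convergence via Lemma \ref{lemma_homkonv_konv_in_dist}, identify $\mathbf{X}_W^{[k]}$ as a mixed Poisson array, use edge stationarity to note that the conditional law of $\mathbf{X}_n^{[k]}$ given the degrees is the configuration model and depends only on $(d_1,\dots,d_k,2m)$, prove a conditional Poisson limit uniformly on the compact region \eqref{assumptions_eps}, and finally integrate against the converging degrees by truncating to events like $B_n^{\varepsilon}$ and using Portmanteau together with boundedness of the conditional probabilities. Where you genuinely diverge is the proof of the conditional limit $\eqref{edge_stac_condprob_given_degrees}=\eqref{edge_stac_poi_approx_formula}$: the paper computes the conditional probability \emph{exactly} as the ratio $\eqref{tutu_kedvezo}/\eqref{tutu_osszes}$ of urn-configuration counts (the ``grey balls'' device) and then carries out direct asymptotics of that ratio, including a somewhat delicate case split at $d^* \leq n^{1/2}$ versus $d^* > n^{1/2}$ to control the falling factorials when some degree with $d(A,i)>0$ is small. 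You instead compute joint factorial moments of the block in the configuration model, which factorize because distinct edges of a perfect matching use disjoint stubs, and invoke the method of moments, legitimate since a product of Poisson laws is moment-determined. Your route is more modular and standard (it is the classical Poisson approximation for the configuration model) and in fact sidesteps the paper's case split: the absolute bounds $\abs{(d_i)_{(r)}/n^r - z_i^r} \leq C_r \varepsilon^{-(r-1)}/n$ hold uniformly on \eqref{assumptions_eps} including $z_i$ near $0$, so no lower bound on degrees is needed. The one step you correctly flag but should make explicit is upgrading pointwise moment convergence to the locally uniform convergence $\psi_n \to \psi$ of conditional \emph{point probabilities}: this follows either by a compactness/subsequence argument (the moment computation is valid along arbitrary converging parameter sequences, and $\psi$ is continuous), or more quantitatively by expressing point probabilities through the absolutely convergent inclusion--exclusion series in factorial moments, using that the moments admit a uniform geometric bound of order $\varepsilon^{-3r}$ on \eqref{assumptions_eps}. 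With that detail supplied, your argument is a complete and arguably cleaner substitute for the paper's exact-enumeration computation, at the cost of losing the explicit closed formula and quantitative error terms the paper's counting provides.
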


\begin{proof}
The infinite random array $\mathbf{X}_W$ (see Definition \ref{def_X_W}) can be alternatively defined in the
following way:
Let $\left(X_W(i,j)\right)_{i\leq j}$ be conditionally independent given
 $\left(Z_i\right)_{i \in \N}$ with conditional distribution
 $X_W(i,j) \sim \text{POI} \left( \frac{Z_i Z_j}{\rho} \right)$  if $i<j$ and
 $\frac{X_W(i,i)}{2} \sim \text{POI}\left( \frac{Z_i Z_i}{2\rho} \right)$.

 If $A \in \A_k$ let $A^*$ denote the following modified matrix: $A^*(i,j):=A(i,j)$ if $i \neq j$ but $A^*(i,i):=\frac{A(i,i)}{2}$.
Thus $A^*(i,i)$ is the number of loop edges at vertex $i$.  

Let
$m_{[k]}:=\frac{1}{2} \sum_{i,j} A(i,j)$. Define
\begin{equation*}
\mypoi(A,\left(z_i\right)_{i=1}^k,\rho ):=
\exp \left( \frac{-1}{2\rho} \left(\sum_{i=1}^k z_i \right)^2 \right)
\cdot \prod_{i \leq j} \frac{1}{A^*(i,j)!} \cdot \prod_{i=1}^k \left( z_i \right)^{d(A,i)} \cdot \rho^{-m_{[k]}} \cdot
2^{-\sum_{i=1}^k A^*(i,i)}
\end{equation*}
 By   \eqref{X_W_indep_prod_formula}  and \eqref{edge_stac_graphlim} we have
\begin{equation}\label{poi_graphlim_W_formula_okt25}
\condprob{ \mathbf{X}_W^{[k]} =A}{ \left(Z_i\right)_{i=1}^k}=
\prod_{i=1}^k \prod_{j=i}^k
\mypoi \left( A^*(i,j), \frac{ Z_{i} \cdot Z_{j} }{ \rho \cdot (1+\ind[i=j]) } \right)=
\mypoi(A,\left(Z_i\right)_{i=1}^k,\rho)
.
\end{equation}
By Lemma \ref{lemma_homkonv_konv_in_dist}  we only need to show that  we have
\begin{equation}\label{edge_stac_conv_prob_A}
\forall\, k\in \N, \; \forall \, A \in \A_k:\;
 \lim_{n \to \infty} \prob{\mathbf{X}_n^{[k]}=A} =
 \prob{\mathbf{X}_W^{[k]}=A}
\end{equation}
  in order to prove $\mathbf{X}_n \toinp W$.

Let $\left( d_i \right)_{i=1}^n$ denote an arbitrary 
degree sequence with $m=\frac{1}{2} \sum_{i=1}^n d_i$ and denote by
\begin{equation}\label{def_eq_z_i_rho_n}
   z_i:=\frac{d_i}{n}, \quad \rho_n:= \frac{2m}{n^2}.
\end{equation}

Fix $\varepsilon>0$ and $A \in \A_k$. We are going to prove that if 
\begin{equation}\label{assumptions_eps}
\varepsilon \leq \rho_n \leq \varepsilon^{-1}, \qquad \forall\;  i \in [k]\, :\;z_i \leq \varepsilon^{-1}
\end{equation}
 then 
\begin{gather}
\label{edge_stac_condprob_given_degrees}
\condprob{ \mathbf{X}_n^{[k]} =A}{ \left( d(\mathbf{X}_n,i) \right)_{i=1}^k=\left( d_i \right)_{i=1}^k, \;
\frac{2m(\mathbf{X}_n)}{ n^2}=\rho_n }=\\
\label{edge_stac_poi_approx_formula}
\mypoi(A,\left(z_i\right)_{i=1}^k,\rho_n)
+\text{Err}(n,A,\varepsilon)
\end{gather}
with  $\lim_{n \to \infty} \text{Err}(n,A, \varepsilon)  =0$.
 We adopt the convention that the value of $\text{Err}(n,A, \varepsilon)$
might change from line to line.

First we assume that $\eqref{edge_stac_condprob_given_degrees}=\eqref{edge_stac_poi_approx_formula}$ holds
 under the condition
\eqref{assumptions_eps},  and deduce \eqref{edge_stac_conv_prob_A} from it.
 Define the events $B_n^{\varepsilon}$ and $B^{\varepsilon}$ by
\begin{align*}
B_n^{\varepsilon}:=& \{ \varepsilon \leq \frac{2m(\mathbf{X}_n)}{ n^2} \leq \varepsilon^{-1}, \; \;  
\forall\, i \in [k]:\;  \frac{1}{n} d(\mathbf{X}_n,i) \leq \varepsilon^{-1} \} \\
B^{\varepsilon}:=& \{ \varepsilon \leq \rho \leq \varepsilon^{-1}, \; \;  
\forall\, i \in [k]:\;  Z_i \leq \varepsilon^{-1} \} 
\end{align*}
Using the Portmanteau theorem and \eqref{edge_stac_rho_asymp}, \eqref{edge_stac_conv_Z_assumpt}
 we get that $\limsup_{n \to \infty} \prob{B_n^{\varepsilon}} \leq \prob{B^{\varepsilon}}$.

\begin{gather}
\abs{\prob{\mathbf{X}_n^{[k]}=A} -
 \prob{\mathbf{X}_W^{[k]}=A}} \stackrel{ \eqref{poi_graphlim_W_formula_okt25}}{=} 
\abs{\prob{\mathbf{X}_n^{[k]}=A} -
\expect{ \mypoi\left(A, \left( Z_i\right)_{i=1}^k, \rho \right)}  } 
 \leq \\
 \label{edge_stac_bors_egy}
\abs{\expect{ \mypoi\left(A, \left( \frac{1}{n} d(\mathbf{X}_n,i)\right)_{i=1}^k, \frac{2m(\mathbf{X}_n)}{ n^2} \right);\, B_n  }-
\expect{ \mypoi\left(A, \left( Z_i\right)_{i=1}^k, \rho \right)}} +\\
\label{edge_stac_bors_ketto}
 \text{Err}(\varepsilon,A,n) + (1-\prob{ B_n^{\varepsilon}})
\end{gather}
By  \eqref{edge_stac_rho_asymp}, \eqref{edge_stac_conv_Z_assumpt}, $\lim_{n \to \infty} \text{Err}(n,A, \varepsilon)  =0$ 
and the fact that $\mypoi\left(A, \left( \cdot \right)_{i=1}^k, \cdot \right)$ is a bounded continuous function on the domain
\eqref{assumptions_eps}
 we obtain
\[\limsup_{n \to \infty}  \eqref{edge_stac_bors_egy} \leq 1-\prob{ B^{\varepsilon}}  \quad \text{ and } \quad
\limsup_{n \to \infty} \eqref{edge_stac_bors_ketto}\leq 1-\prob{ B^{\varepsilon}}.\]
Now $\prob{B^{\varepsilon}} \to 1$ as $\varepsilon \to 0$, from which \eqref{edge_stac_conv_prob_A}
and the statement of the lemma follows under the assumption that \eqref{assumptions_eps} implies
$\eqref{edge_stac_condprob_given_degrees}=\eqref{edge_stac_poi_approx_formula}$.
\end{proof}

\medskip

\begin{proof}[Proof of $\eqref{assumptions_eps} \implies \eqref{edge_stac_condprob_given_degrees}=\eqref{edge_stac_poi_approx_formula}$:]

$ $

We are using random urn configurations to generate $\mathbf{X}_n$.
 Let $\Psi_n$ denote the ball and color exchangeable $[n]^{[2m]}$-valued random variable
with $\left(d(\Psi,i)\right)_{i=1}^n =  \left(d_i\right)_{i=1}^n$, thus  $\Psi_n$ is uniformly distributed
on the set of urn configurations with this type vector.
$\mathbf{X}_n$ can be generated from $\Psi_n$ via \eqref{from_urn_to_adjmatrix}. To determine the distribution of
$\mathbf{X}_n^{[k]}$ we only need to know the positions of the balls of color $i \in [k]$.
We paint the rest of the balls ''grey``.  Let
\begin{equation*}
m_{[k]}:=\frac{1}{2} \sum_{i,j} A(i,j),\quad  d_{[k]}:=\sum_{i=1}^k d_i, \quad  
m_g:= m - d_{[k]} +m_{[k]}.
\end{equation*}
Thus $m_g$
denotes the number of edges of the multigraph spanned by grey vertices.

 In order to prove $\eqref{edge_stac_condprob_given_degrees}=\eqref{edge_stac_poi_approx_formula}$  we first
 give an explicit formula for \eqref{edge_stac_condprob_given_degrees}.

 The number of grey balls is $2m-d_{[k]}$.
The number of all urn configurations with type vector $(d_1,\dots,d_k,2m-d_{[k]})$ is
\begin{equation}\label{tutu_osszes}
 \frac{ (2m)!}{ \left( \prod_{i=1}^k d_i! \right) \cdot
 (2m- d_{[k]})!}
 \end{equation}

The number of urn configurations with type vector $(d_1,\dots,d_k,2m-d_{[k]})$
 for which $\mathbf{X}_n^{[k]} =A$ is
\begin{equation}\label{tutu_kedvezo}
\frac{m! \cdot 2^{m - m_g - \sum_{i=1}^k A^*(i,i)} }
{\left( \prod_{i \leq j} A^*(i,j)!
\right)\cdot \left( \prod_{i=1}^k (d_i -d(A,i))! \right)\cdot m_g!}
\end{equation}
Thus
 $\eqref{edge_stac_condprob_given_degrees}= \frac{\eqref{tutu_kedvezo}}{\eqref{tutu_osszes}}$.
Our aim is to prove $\frac{\eqref{tutu_kedvezo}}{\eqref{tutu_osszes}}=\eqref{edge_stac_poi_approx_formula}$:
after dividing both sides of this equality by $\prod_{i \leq j} \frac{1}{A^*(i,j)!} \cdot 2^{-\sum_i A^*(i,i)}$ we only need to prove
\begin{gather}\label{edge_stac_we_need_lhs}
\frac{m! \cdot \left( \prod_{i=1}^k d_i! \right) \cdot \left( 2m-
d_{[k]} \right)! \cdot 2^{m -m_g}}{ \prod_{i=1}^k (d_i -d(A,i))! \cdot m_g! \cdot (2m)!}
= \\
\label{edge_stac_we_need_rhs}
\exp \left( \frac{-1}{2\rho_n} \left(\sum_{i=1}^k z_i \right)^2 \right)
 \cdot \prod_{i=1}^k \left( z_i \right)^{d(A,i)} \cdot \rho_n^{-m_{[k]}}
+\text{Err}(n, A,\varepsilon)
\end{gather}

Now we rewrite \eqref{edge_stac_we_need_lhs}:
\begin{multline}\label{last_exact_formula}
\eqref{edge_stac_we_need_lhs}=
\left(\prod_{i=1}^k \prod_{l=1}^{d(A,i)} (d_i-d(A,i)+l)
\right)\cdot \left( \prod_{l=1}^{m-m_g} (m_g+l) \right) \cdot
\frac{2^{m-m_g}}{\prod_{l=1}^{d_{[k]}}(2m-d_{[k]}+l)}=\\
\left(\prod_{l=1}^{m-m_g} \frac{ 2m_g +2l}{2m-d_{[k]}+l} \right)
\cdot \frac{
\prod_{i=1}^k \prod_{l=1}^{d(A,i)} (d_i-d(A,i)+l)
}{\prod_{l=1}^{m_{[k]}} (2m-m_{[k]}+l)}
\end{multline}

Now we  approximate various terms that appear in the right hand side of \eqref{last_exact_formula} using
  our assumpions \eqref{assumptions_eps}:
\begin{align}
\label{bound_z_1} \prod_{l=1}^{m-m_g} \frac{ 2m_g +2l}{2m-d_{[k]}+l} =\left(\prod_{l=1}^{d_{[k]}} \frac{2m-2l}{2m-l}\right)&
\cdot
\left( 1+ \frac{1}{n} \text{Err}(A,\varepsilon)\right) \\
\label{bound_z_2}
\prod_{l=1}^{m_{[k]}} (2m-m_{[k]}+l)=(2m)^{m_{[k]}}&
\cdot
\left( 1+ \frac{1}{n^2} \text{Err}(A,\varepsilon)\right)
\end{align}
where $0\leq \abs{\text{Err}(A,\varepsilon)}<+\infty$ is independent of $n$.

Let $d^*= \min \{ d_i \, : \; i \in [k], \; d(A,i)>0 \}$. We consider two cases separately: 

If $d^* \leq n^{1/2}$ then using \eqref{assumptions_eps}
 it is easy to see that $\eqref{last_exact_formula} \leq \text{Err}(A,\varepsilon) n^{-1/2}$ and also
$\mypoi(A,\left(z_i\right)_{i=1}^k, \rho_n ) \leq \text{Err}(A,\varepsilon) n^{-1/2}$, so  
$\eqref{edge_stac_we_need_lhs}=\eqref{edge_stac_we_need_rhs}$ holds when $d^* \leq n^{1/2}$.

If $d^* > n^{1/2}$ then we have
\begin{equation}\label{bound_z_3}
 \prod_{i=1}^k \prod_{l=1}^{d(A,i)} (d_i-d(A,i)+l)= \left( \prod_{i=1}^k (d_i)^{d(A,i)} \right) 
\left(1+ \frac{1}{\sqrt{n}}\text{Err}(A,\varepsilon)\right).
\end{equation}
Putting \eqref{bound_z_1}, \eqref{bound_z_2} and \eqref{bound_z_3} together we get

\begin{multline*}
\eqref{last_exact_formula}= \left(\prod_{l=1}^{d_{[k]}} \frac{2m-2l}{2m-l}\right)\cdot
\frac{\prod_{i=1}^k (d_i)^{d(A,i)}}{(2m)^{m_{[k]}}} \cdot
 \left( 1+ \text{Err}(n,A,\varepsilon)\right) \stackrel{\eqref{def_eq_z_i_rho_n}}{=}\\
\left(\prod_{l=1}^{d_{[k]}} \frac{1-\frac{2l}{ n^2 \rho_n}}{1-\frac{l}{ n^2 \rho_n }}\right)\cdot
 \frac{\prod_{i=1}^k \left( n\cdot z_i \right)^{d(A,i)}}{ (n^2  \rho_n)^{m_{[k]}}}\cdot
 \left(1+ \text{Err}(n,A,\varepsilon)\right)
\stackrel{\eqref{assumptions_eps}}{=} \\
\exp \left( \frac{-1}{2\rho_n} \left(\sum_{i=1}^k z_i \right)^2 \right)
 \cdot \prod_{i=1}^k \left( z_i \right)^{d(A,i)} \cdot \rho_n^{-m_{[k]}}
+\text{Err}(n, A,\varepsilon)
\end{multline*}
This completes the proof of $\eqref{edge_stac_we_need_lhs}=\eqref{edge_stac_we_need_rhs}$.
\end{proof}

$ $

\begin{proof}[Proof of Theorem \ref{thm_graph_lim_of_edge_stationary}]
Given $\mathbf{X}_n$ for every $n \in \N$ let us define the vertex exchangeable random adjacency matrix 
$\tilde{\mathbf{X}}_n$ in the following way:
let $\pi_n$ denote a uniformly distributed permutation $\pi_n: [n] \hookrightarrow [n]$, independent from  $\mathbf{X}_n$.  
Let 
\begin{equation}\label{def_eq_permut_exch_X_tilde}
\left(\tilde{X}_n(i,j)\right)_{i,j=1}^n:= \left( X_n(\pi_n(i),\pi_n(j) \right)_{i,j=1}^n.
\end{equation}

Then $\tilde{\mathbf{X}}_n$ is indeed vertex exchangeable, moreover $\prob{t_=(F,\tilde{\mathbf{X}}_n)=t_=(F,\mathbf{X}_n)}=1$ for every 
$F \in \M$, so $\mathbf{X}_n \toinp W$ is  equivalent to $\tilde{\mathbf{X}}_n \toinp W$, 
which is in turn equivalent to $\tilde{\mathbf{X}}_n \toind \mathbf{X}_W$
by Lemma \ref{lemma_homkonv_konv_in_dist}. By \eqref{def_eq_permut_exch_X_tilde} the conditions \eqref{egyenletes_int_zuri_1} and
\eqref{egyenletes_int_zuri_2} are equivalent to the uniform integrability of $\left(\tilde{X}_n(1,2)\right)_{n=1}^{\infty}$
and $\left(\tilde{X}_n(1,1)\right)_{n=1}^{\infty}$, respectively, thus we can apply Lemma 
\ref{lemma_uniform_integrabiliy_whole}/\eqref{lemma_uniform_integrabiliy_b} to deduce that for all $k \in \N$
\begin{equation}\label{degrees_converge_to_indep_DW}
 \left( \frac{1}{n} d(\tilde{\mathbf{X}}_{n},i)\right)_{i=1}^k 
\toind
 \left( D(\mathbf{X}_W,i)\right)_{i=1}^k.
\end{equation}

Note that by \eqref{def_eq_F_W}, Definition \ref{def_X_W} and \eqref{degree_W_loln} we have that
 $\left( D(\mathbf{X}_W,i)\right)_{i=1}^k$ are i.i.d. with 
probability distribution function
$ F_W(z)$.

Now we are going to prove that  $\frac{2m(\tilde{\mathbf{X}}_n)}{n^2} \toinp \rho(W)$.
In order to do so we define the truncated adjacency matrix $\tilde{\mathbf{X}}_n^l$ by
$\tilde{X}_n^l(i,j):=\min \{\tilde{X}_n(i,j), l\} $ and the truncated multigraphon
$W^l$ which satisfies $\left(X_{W^l}(i,j)\right)_{i,j=1}^{\infty} \sim \left( \min\{ X_{W}(i,j),l\}  \right)_{i,j=1}^{\infty}$.

Now we show that if we fix $l \in \N$ then
 $\frac{2m(\tilde{\mathbf{X}}^l_n)}{n^2} \toinp \rho(W^l)$. 

The equations marked by $(*)$ below are true by exchangeability:
\begin{multline*}
 \lim_{n \to \infty} \expect{ \frac{1}{n} \sum_{i=1}^n \frac{1}{n}d(\tilde{\mathbf{X}}^l_n,i)} \stackrel{(*)}{=}
 \lim_{n \to \infty} \expect{\frac{1}{n} d(\tilde{\mathbf{X}}_n^l,1)} \stackrel{\eqref{degrees_converge_to_indep_DW}}{=} 
\expect{ D(\mathbf{X}_{W^l},1)} \stackrel{\eqref{degree_W_expect}}{=}\rho(W^l). \\
 \lim_{n \to \infty} \var{ \frac{1}{n} \sum_{i=1}^n \frac{1}{n}d(\tilde{\mathbf{X}}^l_n,i)}=
\lim_{n \to \infty} \frac{1}{n^2} \sum_{i,j=1}^n \cov{ \frac{1}{n}d(\tilde{\mathbf{X}}^l_n,i)} {\frac{1}{n}d(\tilde{\mathbf{X}}^l_n,j)}
 \stackrel{(*)}{=} \\
 \lim_{n \to \infty} 
\left( \frac{1}{n} \var{ \frac{1}{n}d(\tilde{\mathbf{X}}^l_n,1)}+ \frac{n-1}{n} 
\cov{ \frac{1}{n}d(\tilde{\mathbf{X}}^l_n,1) }{ \frac{1}{n}d(\tilde{\mathbf{X}}^l_n,2)} \right) 
\stackrel{\eqref{degrees_converge_to_indep_DW}}{=} 0.
\end{multline*}
Having established $\forall \, l: \; \frac{2m(\tilde{\mathbf{X}}^l_n)}{n^2} \toinp \rho(W^l)$, the relation
 $\frac{2m(\tilde{\mathbf{X}}_n)}{n^2} \toinp \rho(W)$ follows from 
\[ \lim_{l \to \infty} \rho(W^l) = \rho(W), \qquad 
 \forall \, \varepsilon>0 : \;
\lim_{l \to \infty} \sup_{n \in \N} 
\prob{ \frac{2m(\tilde{\mathbf{X}}_n)}{n^2}-\frac{2m(\tilde{\mathbf{X}}^l_n)}{n^2} \geq \varepsilon} 
\stackrel{\eqref{egyenletes_int_zuri_1},
\eqref{egyenletes_int_zuri_2} }{=} 0, \]
and the $\varepsilon/3$-argument.
So conditions \eqref{edge_stac_rho_asymp} and \eqref{edge_stac_conv_Z_assumpt}  are satisfied, thus
we can apply  Lemma \ref{lemma_edge_stac_graphlim} to show that $\tilde{\mathbf{X}}_n \toinp \hat{W}$, where
$\hat{W}$ is of form \eqref{edge_stac_lim_graphon}.
\end{proof}

$ $

\begin{proof}[Proof of Theorem \ref{thm_stac_graphlim}]
The distribution \eqref{stationary} arises from the P\'olya-$\Psi^n_{2m}$ urn model \eqref{polya_distribution}
 with $2m$ balls and $n$ colors  via
\eqref{from_urn_to_adjmatrix}. The distribution \eqref{polya_distribution} is ball and color exchangeable, so
$\mathbf{X}_n$ is vertex exchangeable and edge stationary. If we want to prove  Theorem \ref{thm_stac_graphlim} then
 by Lemma \ref{lemma_edge_stac_graphlim} we only need to show
that \eqref{edge_stac_conv_Z_assumpt} holds for all $k \in \N$
where $\left( Z_i \right)_{i \in \N}$ are i.i.d. with density function $\mygamma(x,\kappa,\frac{\kappa}{\rho})$ (see \eqref{def_mygamma}).
 We may use the method of moments to prove convergence in distribution, since
the Gamma distribution is uniquely determined by its moments. Thus we need to show that if
$\nu_1,\dots, \nu_k \in \N$ then
\begin{equation*}
\lim_{n \to \infty} \expect{ \prod_{i=1}^k \left( \frac{1}{n} d( \Psi^n_{2m(n)}, i) \right)^{\nu_i}}=
\expect{ \prod_{i=1}^k Z_i^{\nu_i}}=
\prod_{i=1}^k \left( \frac{\rho}{\kappa} \right)^{\nu_i} \cdot \prod_{j=1}^{\nu_i} \left(\kappa+ j-1\right).
\end{equation*}
Fix $k$ and $\nu_i,\, i \in [k]$.
Let $\nu=\sum_{i=1}^k \nu_i$ and denote by $\psi$ a particular element of $[k]^{[\nu]}$ with type vector $(\nu_1,\dots,\nu_k)$.
By the construction of the P\'olya-$\Psi^n_{2m}$ distribution we have
\begin{equation*}
\prob{ \forall\, l \in [\nu]:\;  \Psi^n_{2m}(l)=\psi(l)}=\frac{\prod_{i=1}^k
\prod_{j=1}^{\nu_i}(\kappa+j-1)}{ \prod_{j=1}^{\nu}(\kappa n +j-1)}=\Ordo(n^{-\nu})
\end{equation*}
Denote by $\ul{\nu}:= \{ (i,j)\,:\, i \in [k], \, j \in [\nu_i] \}$. The number of functions $f: \ul{\nu} \to [2m]$ with
$\abs{\mathcal{R}(f)}=N$ is $\Ordo((2m(n))^N)=\Ordo( n^{2N}) $ if $ 1 \leq N \leq \nu$.
\begin{multline*}
\lim_{n \to \infty} \expect{ \prod_{i=1}^k \left( \frac{1}{n} d( \Psi^n_{2m(n)}, i) \right)^{\nu_i}}=\\
\lim_{n \to \infty} \frac{1}{n^{\nu}}
\sum_{f: \ul{\nu} \to [2m]} \prob{ \forall \, (i,j) \in \ul{\nu}: \;\; \Psi_{2m(n)}^n(f(i,j))=i}=\\
\lim_{n \to \infty} \frac{1}{n^{\nu}}
\sum_{f: \ul{\nu} \hookrightarrow [2m]} \prob{ \forall\, (i,j) \in \ul{\nu}: \;\; \Psi_{2m(n)}^n(f(i,j))=i}
+ \lim_{n \to \infty} \frac{1}{n^{\nu}} \sum_{N=1}^{\nu -1} \Ordo(n^{2N}) \Ordo(n^{-N})\stackrel{(\ast)}{=}\\
\lim_{n \to \infty} \frac{\prod_{k=1}^{\nu} (2m(n)-k+1)  }{n^{\nu}} \prob{ \forall\, l \in [\nu]: \;  \Psi^n_{2m}(l)=\psi(l)}
\stackrel{\eqref{edge_stac_rho_asymp}}{=}
\prod_{i=1}^k \left( \frac{\rho}{\kappa} \right)^{\nu_i} \cdot \prod_{j=1}^{\nu_i} \left(\kappa+ j-1\right).
\end{multline*}
The equation $(\ast)$ holds true by ball exchangeability.
\end{proof}

\end{document}